\newtheorem{theorem}{Theorem}
\newtheorem{lemma}{Lemma}
\newtheorem{definition}{Definition}
\newtheorem{remark}[theorem]{Remark}
\let\pa\partial
\newcommand{\T}{\mathcal{T}}
\let\r\rho
\title{Lane Formation by side-stepping} 
\author{Martin Burger$^*$}
\author{Sabine Hittmeir$^\ddagger$}
\author{Helene Ranetbauer$^\ddagger$}
\author{Marie-Therese Wolfram$^\ddagger$}
\begin{document}
\maketitle
\renewcommand{\thefootnote}{\fnsymbol{footnote}}
\footnotetext[1]{Institut f\"ur Numerische und Angewandte Mathematik, Westf\"alische Wilhelms Universit\"at
M\"unster, Einsteinstrasse 62, D 48149 M\"unster, Germany (martin.burger@wwu.de).}
\footnotetext[3]{Radon Institute for Computational and Applied Mathematics, 
Austrian Academy of Sciences, Altenberger Strasse 69, 4040 Linz, Austria (sabine.hittmeir@ricam.oeaw.ac.at, helene.ranetbauer@ricam.oeaw.ac.at, mt.wolfram@ricam.oeaw.ac.at). Supported by the Austrian Academy of Sciences \"OAW via the New Frontiers project NST-0001.}
\renewcommand{\thefootnote}{\arabic{footnote}}

\pagestyle{myheadings}
\thispagestyle{plain}
\markboth{Burger, Hittmeir, Ranetbauer, Wolfram}{Lane Formation for a pedestrian model}

\begin{abstract}
In this paper we study a system of nonlinear partial differential equations, which describes the evolution
of two pedestrian groups moving in opposite direction. The pedestrian dynamics are driven by aversion and cohesion, i.e.
the tendency to follow individuals from the own group and step aside in the case of contraflow. We start with a 2D lattice
based approach, in which the transition rates reflect the described dynamics, and derive the corresponding PDE system
by formally passing to the limit in the spatial and temporal discretization. We discuss the existence of special stationary solutions,
which correspond to the formation of directional lanes and prove existence of global in time bounded weak solutions. The proof is based on
an approximation argument and entropy inequalities. Furthermore we illustrate the behavior of the system with numerical simulations.
\end{abstract}

\bigskip

\begin{small}
\textbf{Keywords:} diffusion, size exclusion, cross-diffusion, global existence of solutions.
\end{small}


\section{Introduction}

In the last decades demographics, urbanization and changes in our society resulted in an increased emergence of large pedestrian crowds, for example the commuter traffic in urban underground stations, political demonstrations or the evacuation of large buildings.  Understanding the dynamics of these crowds has become a fast growing and important field of research. The first research activities started in the field of transportation research, physics and social sciences, but the ongoing development of mathematical models initiated a lot of research also in the applied mathematics community. Nowadays mathematical tools to analyze and investigate the derived models provide useful new insights into the dynamics of pedestrian crowds.\\ 
A variety of different mathematical models has been proposed in the past which can be generally classified into microscopic and macroscopic approaches. In the microscopic framework the dynamics of each individual is modeled taking into account social interactions with all others as well as interactions with the physical surrounding. This approach results in high dimensional and very complex systems of equations. Examples include the social force model by Helbing (cf. \cite{Chraibi2011425}, \cite{helbing1995social}, \cite{helbing2000simulating}), cellular automata (cf. \cite{kirchner2002simulation}, \cite{blue2001cellular}, \cite{fukui1999self}, \cite{anguige2009one}) or stochastic optimal control approaches, cf. \cite{hoogendoorn2004pedestrian}.\\
Macroscopic models, where the crowd is treated as a density, can be derived by coarse graining procedures from microscopic equations (see e.g. \cite{burger2011continuous}), leading to nonlinear conservation laws or coupled systems of such (see e.g.  \cite{hughes2002continuum}, \cite{colombo2005pedestrian}, \cite{colombo2012class}). Other approaches heuristically motivating macroscopic models are based upon optimal transportation theory, cf. \cite{maury2010macroscopic}, mean field games (cf. \cite{lasry2007mean}, \cite{lachapelle2011mean}, \cite{Burger20141311}) or optimal control cf. \cite{fronasier2014}. Piccoli and co-workers (cf. \cite{piccoli2009pedestrian} and \cite{cristiani2011multiscale}) proposed a measure based approach capable to describe pedestrian dynamics on both microscopic and macroscopic scale - hence bridging the gap between the two description levels. Recently there has been an increasing interest in kinetic models and their respective hydrodynamic limits in pedestrian dynamics, see for example \cite{moussaid2012traffic} and \cite{Degond2013809}.\\ 
For an extensive review on the mathematical literature concerning crowd dynamics and the closely related field of traffic dynamics we refer to \cite{bellomo2011modeling}.

\noindent In this paper  we (formally) derive and rigorously analyze a PDE system describing the evolution of two pedestrian groups moving in opposite direction. The individual dynamics are driven by two forces, cohesion and aversion. We show that this minimal dynamics already result in complex macroscopic features, namely the formation of directional lanes. 
We start with a 2D lattice model, in which the transition rates, i.e. the rate at which a particle jumps from one site to the next, express the tendency of individuals to stay within their own group (i.e. follow individuals moving in the same direction) while stepping aside when individuals from the other group approach.  The corresponding mean-field PDE model can be derived by a  Taylor expansion 
(up to second order) and is a nonlinear cross diffusion system with degenerate mobilities.\\
Similar models have been proposed in the literature, for example in the context of ion transport, cf. \cite{MR2745794} or population dynamics, cf. \cite{zamponi2015analysis}. The coherent difference of our model to these works are additional challenging features, namely a perturbed gradient flow structure as well as an anisotropic degenerate diffusion matrix. Although the system lacks the classical gradient flow structure, we can show that the entropy grows at most linearly in time. The corresponding entropy estimates are a crucial ingredient for deriving the global existence result for bounded weak solutions. The existence proof is based on an implicit time discretization and an $H^1$-regularization of the 
time-discrete problem. Note that we follow a different approach than J\"ungel in \cite{jungel2014boundedness}, which has the advantage that the method is based on an $H^1$-regularization only and does not require the additional Bilaplace operator. We define a fixed point operator in $L^2(\Omega)$ and use Schauder's fixed point theorem to deduce the existence of a solution to the regularized problem. The derived entropy estimates as well as a generalized version of the Aubin-Lions lemma justify the limit in the regularization parameter. 

\noindent This paper is organized as follows: In Section \ref{modelling} we present the 2D lattice based model and derive its (formal) mean-field limit via Taylor expansion up to second order. Furthermore we discuss the existence of special stationary solutions in Section \ref{stationary_solutions}. Section \ref{basic_properties} focuses on structural features of the resulting PDE system, such as the corresponding entropy functional and the related dissipation inequality. Furthermore we study the boundedness of the densities, which is an essential prerequisite for the global existence proof outlined in Section \ref{mainresult}. Finally we illustrate the behavior of the model with various numerical experiments, which reproduce well known phenomena such as lane formation in Section \ref{numerical_simulations}. 
 
\section{Mathematical modelling}\label{modelling}
In this section we present the formal derivation of the proposed PDE model from a microscopic discrete lattice approach. We consider two groups of individuals moving in opposite direction, i.e. one group is moving to the right, the other to the left. The individual dynamics are driven by two basic objectives: first individuals try to stay within or close to their own group, i.e. pedestrians walking in the same direction. Moreover they step aside when being approached by an individual moving in the opposite direction. Based on this minimal interaction rules we derive the corresponding
PDE model by Taylor expansion up to second order in the following.

\subsection{The microscopic model}
Throughout this paper we refer to the groups of individuals moving to the right and left as red and blue individuals respectively. Their dynamics are driven by the objectives described above and correspond to cohesion and aversion. Let us consider a domain $\Omega \subseteq \mathbb{R}^2$, partitioned into an equidistant grid of mesh size $h$. Each grid point $(x_i, y_j) = (ih, jh)$, $i\in \{0,\ldots N\}$ and $j\in \{0, \ldots M\}$ can be occupied by either a red or a blue individual.
The probability to find a red individual at time $t$ at location $(x_i,y_j)$ is given by:
\[r_{i,j}(t)= P(\text{red individual is at position } (x_i,y_j) \text{ at time t}),\]
with an analogous definition for $b_{i,j}(t)$. We set $t_k=kh$ for $k\in \mathbb{N}$ and use the abbreviation $r_{i,j}=r_{i,j}(t_k)$ if the time step $t_k$ is obvious.
The dynamics of the individuals are driven by the evolution of the probabilities $r$ and $b$. These probabilities depend on the transition rates of individuals. Let $\T^{\{i,j\}\rightarrow\{i+1,j\}}$ denote the transition rate of an individual to move from the discrete point $(x_i,y_j)$ to $(x_{i+1},y_j)$.
We define the transition probabilities for the reds as:
\begin{align}\label{transprob}
\begin{aligned}
\T_r^{\{i,j\}\rightarrow \{i+1,j\}}&=(1-\rho_{i+1,j})(1+\alpha \, r_{i+2,j}),\\
\T_r^{\{i,j\}\rightarrow \{i,j-1\}}&=(1-\rho_{i,j-1})(\gamma_0+\gamma_1 \, b_{i+1,j}),\\
\T_r^{\{i,j\}\rightarrow \{i,j+1\}}&=(1-\rho_{i,j+1})(\gamma_0+\gamma_2 \, b_{i+1,j}),
\end{aligned}
\end{align}
where $\rho=r+b$, $0\leq \gamma_0,\gamma_1,\gamma_2\leq 1$ and $0\leq\alpha\leq\frac{1}{2}$. 
The factor $(1-\rho)$ corresponds to size exclusion, i.e. an individual cannot jump into the neighboring cell if it is occupied. Note that we assume that individuals only anticipate the dynamics in their direction of movement, i.e. they do not look backwards, which is reasonable when modeling the movement of pedestrians. The second factor in the transition probabilities (\ref{transprob}) corresponds to cohesion and aversion. If $\alpha >0$ the probability of moving in the walking direction is increased if the individual in front, i.e. at position $(x_{i+2},y_{j})$, is moving in the same direction (assuming that the cell $(x_{i+1},y_j)$ is not occupied). \\
Aversion corresponds to sidestepping. If $\gamma_1\geq \gamma_2>0$, an individual steps aside if another individual, in \eqref{transprob} a blue particle located at $(x_{i+1},y_j)$, is approaching. If $\gamma_1>\gamma_2$, there is a preference to make a step to the right hand side with respect to their direction of movement, if $\gamma_2>\gamma_1$, to the left. From the perspective of an observer red individuals prefer to make a jump down if a blue individual is ahead of them in the case $\gamma_1>\gamma_2$. The parameter $\gamma_0>0$ includes diffusion in the $y$-direction. In the case of no diffusion, i.e. $\gamma_0=0$, individuals only step aside when being approached by an individual moving in opposite direction.\\
The master equation for the red particles then reads as
\begin{align}\label{master_r}
r_{i,j}(t_{k+1})&=r_{i,j}(t_k)+\T_r^{\{i-1,j\}\rightarrow \{i,j\}} r_{i-1,j}(t_k)\nonumber \\
&\quad+ \T_r^{\{i,j+1\}\rightarrow \{i,j\}} r_{i,j+1}(t_k)+\T_r^{\{i,j-1\}\rightarrow \{i,j\}} r_{i,j-1}(t_k)\\
&\quad  - \left(\T_r^{\{i,j\}\rightarrow \{i+1,j\}}+\T_r^{\{i,j\}\rightarrow \{i,j-1\}}+\T_r^{\{i,j\}\rightarrow \{i,j+1\}}\right) r_{i,j}(t_k). \nonumber
\end{align}
The probability to find a red particle at location $(x_i, y_j)$ in space corresponds to the probability that a particle located at $(x_{i-1},y_j)$ jumps forward (first term), particles
located above or below, i.e. at $(x_i,y_{j \pm 1})$ jump up or down (second line), minus the probability that a particle located at $(x_i,y_j)$ moves forward or steps
aside (third line). The corresponding transition rates for the blue particles are defined accordingly to (\ref{transprob}) by:
\begin{align}
\begin{aligned}
\T_b^{\{i,j\}\rightarrow \{i-1,j\}}&=(1-\r_{i-1,j})(1+\alpha \, b_{i-2,j}),\\
\T_b^{\{i,j\}\rightarrow \{i,j+1\}}&=(1-\r_{i,j+1})(\gamma_0 + \gamma_1 \, r_{i-1,j}),\\
\T_b^{\{i,j\}\rightarrow \{i,j-1\}}&=(1-\r_{i,j-1})(\gamma_0+\gamma_2 \, r_{i-1,j}).
\end{aligned}
\end{align}
The master equation for the blue particles has the same structure as \eqref{master_r}, i.e.: 
\begin{align}\label{master_b}
b_{i,j}(t_{k+1})&=b_{i,j}(t_k)+\T_b^{\{i+1,j\}\rightarrow \{i,j\}} b_{i+1,j}(t_k)\nonumber\\
&\quad+ \T_b^{\{i,j-1\}\rightarrow \{i,j\}} b_{i,j-1}(t_k) + \T_b^{\{i,j+1\}\rightarrow \{i,j\}} b_{i,j+1}(t_k)\\
&\quad - \left(\T_b^{\{i,j\}\rightarrow \{i-1,j\}}+\T_b^{\{i,j\}\rightarrow \{i,j+1\}}+\T_b^{\{i,j\}\rightarrow \{i,j-1\}}\right) b_{i,j}(t_k). \nonumber
\end{align}

\subsection{Derivation of the macroscopic model}
In the following we shall consider the formal limit $h=\Delta t=\Delta x=\Delta y \rightarrow 0$ in equations (\ref{master_r}) and (\ref{master_b}) to derive the 
corresponding PDE system. After performing a Taylor expansion up to second order, we obtain
\begin{align}\label{system}
\begin{aligned}
\pa_t r &=-\nabla \cdot J_r,\\
\pa_t b &=-\nabla \cdot J_b,
\end{aligned}
\end{align}
where 
\[J_r:=\begin{pmatrix}
(1-\r)(1+\alpha r)r+\frac{h}{2}\left[\pa_x(r(1-\r)(1+\alpha r))-2((1-\r)\pa_x r)\right]\\\\
-(\gamma_1-\gamma_2)(1-\rho)b r-\frac{h}2\left[(\gamma_1+\gamma_2)\left((1-\r)\pa_y(rb)+br\pa_y\r\right) \right.\\
\left.+2\gamma_0\left((1-\rho)\pa_y r +r\pa_y\rho\right)+2(\gamma_1-\gamma_2)(1-\r)r\pa_xb  \right]\\
\end{pmatrix},\]
and
\[J_b:=\begin{pmatrix}
-(1-\r)(1+\alpha b)b+\frac{h}{2}\left[\pa_x(b(1-\r)(1+\alpha b))-2((1-\r)\pa_x b)\right]\\\\
(\gamma_1-\gamma_2)(1-\rho)b r-\frac{h}2\left[(\gamma_1+\gamma_2)\left((1-\r)\pa_y(rb)+br\pa_y\r\right)\right.\\
\left. +2\gamma_0\left((1-\rho)\pa_y b +b\pa_y\rho\right)+2(\gamma_1-\gamma_2)(1-\r)b\pa_x r  \right]\\
\end{pmatrix},\]
denote the fluxes for $r$ and $b$ respectively. The first order terms correspond to the movement of the reds and blues to the right and left in $x$-direction respectively as well as to the preference of either stepping to the right or left in $y$-direction (depending on the difference $\gamma_1 - \gamma_2$). The second order terms correspond to the cross diffusion terms where the prefactor $h$ denotes the lattice size.
We consider system (\ref{system}) on $\Omega\times (0,T)$, where $\Omega \subseteq \mathbb{R}^2$ is a bounded domain.
In our computational examples, see Section \ref{numerical_simulations}, the domain $\Omega$ corresponds to a corridor, i.e. $\Omega=[-L_x,L_x]\times[-L_y,L_y]$ with $L_y \ll L_x$. As individuals cannot penetrate the walls, we set no flux boundary conditions on the top and bottom, i.e.

\begin{align*}
J_{r,b}\cdot \begin{pmatrix}
0\\\pm 1
\end{pmatrix} =0  \qquad \textnormal{at} \ y=\pm L_y\,.
\end{align*}
At the entrance and exit of the corridor, i.e. at $x=\pm L_x$, we assume periodic boundary conditions.
Note that Robin type boundary conditions, where the in- and outfluxes at the entrance and exits are directly proportional to the local density, would be more realistic. 
The boundary conditions set above correspond to the simplest choice and shall serve as a starting point for the investigation of more realistic and complex models in the near future, cf. \cite{burger2015flow}.\\
We would like to remark that the lengthy Taylor expansion and formal limiting procedure can be accomplished automatically using computer algebra techniques, even for more general classes of models, see \cite{koutschan2015symbolic}.

\subsection{Stationary Solutions} \label{stationary_solutions}

In this last part of the modelling section we study the existence of specific stationary solutions, which correspond to the formation of lanes. These segregation phenomena can be observed in crowded streets with pedestrians as well as in experiments. Lane formation is a rather intuitive phenomenon, but a strict mathematical definition is less obvious. In the following we shall distinguish between strict segregation and the case when still some pedestrians might get into the counterflow, leading to the definition:
\medskip
\begin{definition}\label{d:lanes}
Let $(r,b)$ denote a stationary solution to system \eqref{system} for $\gamma_1 > \gamma_2$, which is x-independent, i.e. for all $x,x_0\in[-L_x,L_x]$ and any $y\in [-L_y,L_y]$ we have $(r,b)(x,y)=(r,b)(x_0,y)$. Considering therefore $(r,b)$ as a function of $y$ only, we 
 call $(r,b)\in L^\infty[-L_y,L_y]\times L^\infty[-L_y,L_y]$
\medskip
\begin{itemize}
\item \emph{a solution with strong lane formation}, if the functions $r$ and $b$ have a compact support in y-direction with
\begin{align*}
\operatorname{supp}(r) \cap \operatorname{supp}(b) =\emptyset\qquad 
\textnormal{and}\quad   
\sup_{y\in[-L_y,L_y]}\{\operatorname{supp}(r)\} \leq \inf_{y \in[-L_y,L_y]}\{\operatorname{supp}(b)\}. 
\end{align*}
\item \emph{a solution with weak lane formation}, if the sufficiently smooth solution $(r,b)$ satisfies  
\begin{align*}
\partial_y r < 0 \quad \text{ and }  \quad \partial_y b > 0\,,
\end{align*}
and there exists a point $\tilde{y} \in (-L_y, L_y)$, such that 
\begin{align*}
r(\tilde y)=b(\tilde y)\,.
\end{align*}
\end{itemize}
\end{definition}
\noindent Note that the definition of weak lane formation has to be changed accordingly if individuals have the preference to step to the left instead of right, i.e. $\gamma_2>\gamma_1$.
\noindent We expect that the side-stepping initiates the formation of directional lanes, an assumption that has also been confirmed by the numerical experiments in Section \ref{numerical_simulations} for specific ranges of parameters. In particular we consider system \eqref{system} in the case $0<\alpha\leq\frac{1}{2}$ and $\gamma_1-\gamma_2=\mathcal{O}(h)$, i.e.
\begin{align}\label{equ.system2}
\begin{aligned}
\pa_tr&=-\pa_x\left((1-\rho)(1+\alpha r)r\right)+h\,\pa_y\left((1-\rho)b r\right)-\frac{h}{2}\pa_x^2(r(1-\rho)(1+\alpha r))\\
&\quad +h\pa_x((1-\rho)\pa_x r)+\frac{h}{2}(\gamma_1+\gamma_2)\pa_y\left((1-\rho)\pa_y(rb)+br\pa_y\rho\right)\\
&\quad +h\gamma_0\pa_y\left((1-\rho)\pa_y r +r\pa_y\rho\right)\\
\pa_tb &=\pa_x\left((1-\rho)(1+\alpha b)b\right)-h\,\pa_y\left((1-\rho)b r\right)-\frac{h}{2}\pa_x^2(b(1-\rho)(1+\alpha b))\\
&\quad +h\pa_x((1-\rho)\pa_x b)+\frac{h}{2}(\gamma_1+\gamma_2)\pa_y\left((1-\rho)\pa_y(rb)+br\pa_y\rho\right)\\
&\quad+h\gamma_0\pa_y\left((1-\rho)\pa_y b +b\pa_y\rho\right),
\end{aligned}
\end{align}
where we set without loss of generality $\gamma_1-\gamma_2=h$. Note that the second order terms in $h$ are dropped out, i.e. the terms including $x$- and $y$-derivatives are neglected. 
In this case we can prove \textit{weak lane formation} for $\gamma_0 > 0$ and postulate the formation of \textit{strong lanes} as $\gamma_0 \rightarrow 0$.

\noindent Therefore, we consider system \eqref{equ.system2} and analyze its equilibrium solutions which are constant in $x$-direction. In this case system \eqref{equ.system2} reduces to
\begin{subequations}
\begin{align}
0 &=(1-\rho)rb+\frac{\gamma_1+\gamma_2}{2}\left((1-\rho)\pa_y(rb)+br\pa_y\rho\right)+\gamma_0\left((1-\rho)\pa_y r +r\pa_y\rho\right),\label{equ.system4_r}\\
0 &=-(1-\rho)rb+\frac{\gamma_1+\gamma_2}{2}\left((1-\rho)\pa_y(rb)+br\pa_y\rho\right)+\gamma_0\left((1-\rho)\pa_y b +b\pa_y\rho\right).\label{equ.system4_b}
\end{align}
\label{equ.system4_rb}
\end{subequations}
Note that we have assumed a preference for stepping to the right in \eqref{equ.system2}, which corresponds to the different sign in the first terms of \eqref{equ.system4_rb}. 
If $\rho<1$, we can rewrite \eqref{equ.system4_rb} as 
\begin{subequations}
\begin{align}
0 &=\frac{rb}{1-\rho}+\frac{\gamma_1+\gamma_2}{2}\pa_y\left(\frac{rb}{1-\rho}\right)+\gamma_0\pa_y\left(\frac{r}{1-\rho}\right),\label{equ.system3_r}\\
0 &=-\frac{rb}{1-\rho}+\frac{\gamma_1+\gamma_2}{2}\pa_y\left(\frac{rb}{1-\rho}\right)+\gamma_0\pa_y\left(\frac{b}{1-\rho}\right).\label{equ.system3_b}
\end{align}
\label{equ.system3_rb}
\end{subequations}
Summation of \eqref{equ.system3_r} and \eqref{equ.system3_b} and subsequent integration gives
\begin{align}\label{equ.solutioncurve}
(\gamma_1+\gamma_2)\frac{rb}{1-\rho}+\gamma_0\frac{\rho}{1-\rho}=C,
\end{align}
for some constant $C$. Equation \eqref{equ.system3_rb} allows us to study the behavior of stationary solution curves with respect to the densities $r$ and $b$. Figure \ref{f:stat1} illustrates these stationary solutions in the case $\gamma_1-\gamma_2=h$ for different values of $C$. 
\begin{figure}[h!]
\begin{center}
\includegraphics[width=83mm]{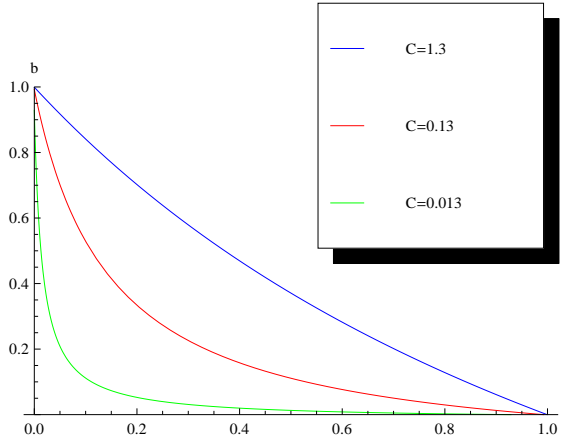}
\caption{Stationary solution curves for $\gamma_1=0.5$, $\gamma_2=0.4$ and $\gamma_0=0.001$}\label{f:stat1}
\end{center}
\end{figure}
If $r=0$ or $b=0$ then $\pa_y r=0$ or $\pa_y b=0$ respectively. Hence solution curves can get arbitrarily close to the $r$- and $b$-axes, but they can only reach them in the case of a trivial solution curve, i.e. consisting only of one stationary point lying on one of the axes. The actual starting and end points of the solution curves as well as the corresponding constants $C$ depend the chosen parameters and on the initial masses of the system, i.e. on
\[ M_r := \int_\Omega r \,dx\,dy \text{ and } M_b := \int_\Omega b\,dx\,dy.\]  
In the case of small values of $C$ we observe a quick change of the densities $r$ and $b$ from high to low values and the other way around. For larger values the densities increase or respectively decrease
slower along the solution curves.\\
The following additional solution properties can be deduced from equations  \eqref{equ.system3_rb} and \eqref{equ.solutioncurve}.
\medskip
\begin{lemma}\label{l:statsol}
Let $(r,b)$ denote solutions to system \eqref{equ.system3_rb} and let $\tilde{C}\in \mathbb{R}^+$ be a constant with $0<\tilde{C}<1$. 
\begin{compactenum}[(i)]
\item There exists no solution $(r,b)$ with $\rho \equiv \tilde{C}$ and $r,b>0$.\label{5}
\item There exists no solution $(r,b)$ with $r\equiv b>0$. \label{6}
\item Any solution $(r,b)$ is monotone  with $\pa_y r<0$ and $\pa_y b>0$.\label{7}
\end{compactenum}
\end{lemma}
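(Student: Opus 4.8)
\emph{Proof plan.} The natural first move is to decouple the two equations in \eqref{equ.system4_rb} by forming their difference and their sum. Subtracting the $b$-equation from the $r$-equation cancels the symmetric cross-diffusion terms and leaves
\[
2(1-\rho)rb + \gamma_0\left((1-\rho)\pa_y(r-b) + (r-b)\pa_y\rho\right) = 0,
\]
whereas adding them recovers the differentiated conservation law \eqref{equ.solutioncurve},
\[
(\gamma_1+\gamma_2)\left((1-\rho)\pa_y(rb) + rb\,\pa_y\rho\right) + \gamma_0\,\pa_y\rho = 0.
\]
Throughout I use the standing assumptions $r,b>0$ and $\rho<1$, so that $1-\rho>0$, $rb>0$ and $\gamma_1+\gamma_2>0$.

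For (\ref{5}) and (\ref{6}) these two identities yield contradictions almost immediately. If $\rho\equiv\tilde C$, then $\pa_y\rho\equiv 0$, so the summed identity forces $(\gamma_1+\gamma_2)(1-\rho)\pa_y(rb)=0$, i.e. $\pa_y(rb)\equiv 0$; together with $r+b$ constant this makes $r$ and $b$ individually constant, and the difference identity then collapses to $2(1-\rho)rb=0$, contradicting $r,b>0$. For (\ref{6}), if $r\equiv b$ then $r-b\equiv 0$ and $\pa_y(r-b)\equiv 0$, so the difference identity reduces to $2(1-\rho)r^2=0$, again impossible for $r>0$.

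The substantive part is (\ref{7}), and the plan is to read the two identities as a linear system for the pair $(\pa_y r,\pa_y b)$. Using $(1-\rho)+(r-b)=1-2b$, $(r-b)-(1-\rho)=2r-1$, $(1-\rho)b+rb=b(1-b)$ and $(1-\rho)r+rb=r(1-r)$, the system becomes
\[
\begin{aligned}
&\gamma_0(1-2b)\,\pa_y r + \gamma_0(2r-1)\,\pa_y b = -2(1-\rho)rb,\\
&((\gamma_1+\gamma_2)b(1-b)+\gamma_0)\,\pa_y r + ((\gamma_1+\gamma_2)r(1-r)+\gamma_0)\,\pa_y b = 0.
\end{aligned}
\]
The two coefficients in the second row are strictly positive. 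Solving by Cramer's rule, the determinant simplifies (after using $r^2+b^2=\rho^2-2rb$) to
\[
\gamma_0(1-\rho)(2\gamma_0 + (\gamma_1+\gamma_2)(\rho-2rb)),
\]
and the crucial algebraic fact is the identity $\rho-2rb=r(1-b)+b(1-r)>0$, which makes the determinant strictly positive, hence nonzero. The numerators coming out of Cramer's rule are $-2(1-\rho)rb\,((\gamma_1+\gamma_2)r(1-r)+\gamma_0)<0$ for $\pa_y r$ and $+2(1-\rho)rb\,((\gamma_1+\gamma_2)b(1-b)+\gamma_0)>0$ for $\pa_y b$, so that $\pa_y r<0$ and $\pa_y b>0$, as claimed.

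I expect the main obstacle to be purely algebraic: reducing the determinant to a manifestly signed expression and spotting the identity $\rho-2rb=r(1-b)+b(1-r)$. This is precisely what guarantees that the determinant never vanishes (so the monotonicity is strict everywhere) and that the indefinite signs of the coefficients $1-2b$ and $2r-1$ do not spoil the conclusion. One should also keep track of the regime $r,b>0$, $\rho<1$, since at points where $rb=0$ the right-hand side of the difference identity degenerates and the strict inequalities would only hold in the limit.
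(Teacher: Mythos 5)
Your proposal is correct, and for part (\ref{7}) it takes a genuinely different route from the paper. Parts (\ref{5}) and (\ref{6}) coincide in substance with the paper's argument: your summed identity is exactly the differentiated form of the paper's first integral \eqref{equ.solutioncurve}, and in both treatments constancy of $\rho$ and of $rb$ forces $r$ and $b$ to be individually constant, after which the difference identity yields $rb=0$. For (\ref{7}), however, the paper argues indirectly: it extracts the monotone quantities $\pa_y\bigl((rb+C_1 r)/(1-\rho)\bigr)<0$ and $\pa_y\bigl((rb+C_1 b)/(1-\rho)\bigr)>0$ of \eqref{equ.ineq1} (with $C_1=2\gamma_0/(\gamma_1+\gamma_2)$), excludes a critical point $\pa_y r(\bar y)=0$ because it would force contradictory signs on $\pa_y b(\bar y)$, invokes symmetry for the monotonicity of $b$, and then pins down the signs by a case distinction on the sign of $\pa_y\rho$. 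You instead solve the $2\times 2$ linear system for $(\pa_y r,\pa_y b)$ outright, and your algebra checks out: the rows $\gamma_0(1-2b)\pa_y r+\gamma_0(2r-1)\pa_y b=-2(1-\rho)rb$ and $\bigl((\gamma_1+\gamma_2)b(1-b)+\gamma_0\bigr)\pa_y r+\bigl((\gamma_1+\gamma_2)r(1-r)+\gamma_0\bigr)\pa_y b=0$ are correct, the determinant does reduce to $\gamma_0(1-\rho)\bigl(2\gamma_0+(\gamma_1+\gamma_2)(\rho-2rb)\bigr)$, and the identity $\rho-2rb=r(1-b)+b(1-r)>0$ makes it strictly positive for $0<r,b$ and $\rho<1$. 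This yields the strict signs pointwise in one stroke, avoids the exclusion and symmetry steps, and as a bonus produces explicit formulas for $\pa_y r$ and $\pa_y b$ that would be useful for the asymptotic regimes $\gamma_0\to 0$ and $\gamma_1+\gamma_2\to 0$ discussed after the lemma; the paper's route, by contrast, requires no inversion and yields the monotone auxiliary quantities \eqref{equ.ineq1}, but at the cost of two contradiction arguments. One shared caveat, which you correctly flag: strictness in (\ref{7}) presupposes $r,b>0$ and $\rho<1$, an assumption the paper leaves implicit and justifies by the earlier observation that solution curves reach the axes only in the trivial case --- indeed your system confirms this, since at a point with $rb=0$ the determinant remains positive while the right-hand side vanishes, forcing $\pa_y r=\pa_y b=0$ there.
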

\begin{proof}
To show \eqref{5} we assume to the contrary that there exists a solution with $\rho \equiv \tilde{C}$ and $r,b>0$. Then equation \eqref{equ.solutioncurve} implies that $rb$ is a positive constant and therefore the same holds true for $r$ and $b$ individually. This is a contradiction to \eqref{equ.system3_rb} as $\frac{rb}{1-\rho}=-\frac{rb}{1-\rho}$ is only true if $rb=0$.

To see \eqref{6} we again argue by contradiction. If $r\equiv b$ we immediately deduce from system \eqref{equ.system3_rb} that $r\equiv b\equiv 0$. 

To prove the monotinicity properties in \eqref{7}
we first observe that \eqref{equ.system3_r} and \eqref{equ.system3_b} imply
\begin{align}
&\pa_y \left( \frac{rb+C_1r}{1-\rho}\right)<0 \quad \text{ and } \quad \pa_y \left( \frac{rb+C_1b}{1-\rho}\right)>0,\label{equ.ineq1}
\end{align} 
for some constant $C_1>0$. This allows to exclude the existence of a $\bar y \in [-L_y,L_y]$ with $\pa_y r(\bar y)=0$, since in this case equations \eqref{equ.ineq1} would yield $\pa_y b(\bar y)<0$ as well as $\pa_y b(\bar y)>0$. Therefore $r$ has to be monotone and due to symmetry $b$ is also monotone with the opposite sign. \\
To show the stated signs of the derivatives we assume $\pa_y r>0$ and $\pa_y b<0$. Subtracting the equations in \eqref{equ.ineq1} then leads to $(r-b)\pa_y \rho<0$ and thus to a contradiction, since for $\pa_y \rho >0$ equation \eqref{equ.ineq1} as well as the assumptions imply $r-b>0$, whereas for $\pa_y \rho <0$ the second equation in \eqref{equ.ineq1} gives $r-b<0$. We therefore obtain the desired monotonicity properties $\pa_y r<0$ and $\pa_y b>0$.\qquad
\end{proof}
\smallskip
Lemma \ref{l:statsol} indicates the existence of weak lane formation. From \eqref{7} we know that $r$ and $b$ are monotone functions which are strictly positive. Hence there
exists a single point $\tilde y \in (-L_y, L_y)$ where $r(\tilde{y}) = b(\tilde{y})$. Due to the side-stepping tendency the reds will move to the bottom, while the blues move
up. In the case of equal masses it is impossible that one density is larger than the other on the whole domain, which implies the formation of weak lanes in the sense of Definition \ref{d:lanes} in this case:
\smallskip
\begin{theorem}
Let $\gamma_0 > 0,~\gamma_1 > \gamma_2$ and $M_r = M_b = M$. Then system \eqref{equ.system3_rb} has non-trivial stationary states, and any stationary solution constant in the $x$-direction exhibits weak lane formation.
\end{theorem}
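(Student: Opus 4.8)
The statement splits into two parts: the \emph{existence} of a non-trivial $x$-independent stationary state with $M_r=M_b=M$, and the claim that \emph{every} such solution exhibits weak lane formation. The second part is almost immediate from Lemma \ref{l:statsol}, so I would dispose of it first and then concentrate on existence.

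For the weak lane formation, recall that Lemma \ref{l:statsol}\,\eqref{7} already provides $\partial_y r<0$ and $\partial_y b>0$, which is the first half of the definition. It remains to produce an interior crossing point $\tilde y$ with $r(\tilde y)=b(\tilde y)$. The plan is to subtract \eqref{equ.system3_r} from \eqref{equ.system3_b}, which cancels the symmetric $\frac{\gamma_1+\gamma_2}{2}\partial_y(\tfrac{rb}{1-\rho})$ terms and yields
\[
\gamma_0\,\partial_y\!\left(\frac{b-r}{1-\rho}\right)=\frac{2rb}{1-\rho}>0,
\]
so that $w:=\frac{b-r}{1-\rho}$ is strictly increasing. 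Since $1-\rho>0$, the sign of $b-r$ equals that of $w$, hence $b-r$ changes sign at most once. The equal-mass hypothesis gives $\int_{-L_y}^{L_y}(b-r)\,dy=0$, which rules out $w$ being of one sign throughout; therefore $w$ vanishes at a unique, necessarily interior, point $\tilde y$ where $r(\tilde y)=b(\tilde y)$. This establishes weak lane formation for every such solution.

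For existence I would first turn \eqref{equ.system3_rb} into a genuine first-order ODE system. Writing $u=\frac{r}{1-\rho}$, $v=\frac{b}{1-\rho}$ and $s=\frac{rb}{1-\rho}=\frac{uv}{1+u+v}$, the two equations read $\gamma_0 u'=-s-\frac{\gamma_1+\gamma_2}{2}s'$ and $\gamma_0 v'=s-\frac{\gamma_1+\gamma_2}{2}s'$; expressing $s'$ through $u',v'$ gives a linear system for $(u',v')$ whose determinant equals $\gamma_0^2+\gamma_0\frac{\gamma_1+\gamma_2}{2}\,\frac{u(1+u)+v(1+v)}{(1+u+v)^2}>0$. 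Hence $(u',v')$ is a smooth function of $(u,v)$ on the admissible region $u,v>0$ and the initial value problem is well posed. I would then exploit the symmetry $(r,b)(y)\mapsto(b,r)(-y)$ of \eqref{equ.system3_rb}: shooting from symmetric data $r(0)=b(0)=\lambda\in(0,\tfrac12)$ produces a solution with $r(y)=b(-y)$, for which $M_r=M_b$ holds automatically. The conserved quantity \eqref{equ.solutioncurve} bounds $\frac{\rho}{1-\rho}\le C/\gamma_0$, so $\rho$ stays strictly below $1$ and the trajectory remains admissible and positive for all $y$ (the axes $r=0$, $b=0$ are approached only asymptotically). Finally I would track $m(\lambda):=\int_{-L_y}^{L_y}\rho_\lambda\,dy=M_r+M_b$: a short computation along the level curve shows that $\rho$ attains its minimum at the symmetric point $y=0$, whence $m(\lambda)\ge 4\lambda L_y\to 2L_y$ as $\lambda\to\tfrac12$, while $m(\lambda)\to0$ as $\lambda\to0$. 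Continuous dependence on $\lambda$ and the intermediate value theorem then deliver some $\lambda$ with $m(\lambda)=2M$ (possible precisely because $M<L_y$), i.e. a non-trivial symmetric stationary state with $M_r=M_b=M$.

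The main obstacle I anticipate lies in the existence part, and within it in the global control of the shooting trajectories rather than the final intermediate-value step. One must guarantee that for every $\lambda\in(0,\tfrac12)$ the solution does not leave $\{r,b>0,\ \rho<1\}$ before reaching $y=\pm L_y$ and that $m(\lambda)$ depends continuously on $\lambda$ up to the endpoints; both hinge on the uniform bounds extracted from the first integral \eqref{equ.solutioncurve} and on the observation that $\rho$ is minimized at the center. The weak-lane-formation part, by contrast, is a direct consequence of Lemma \ref{l:statsol}\,\eqref{7} together with the monotonicity of $w=\frac{b-r}{1-\rho}$ and the equal-mass constraint.
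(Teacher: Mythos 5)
Your proof is correct, and it is in fact substantially more complete than what the paper itself provides: the theorem is stated there without a proof environment, supported only by the preceding paragraph (strict monotonicity from Lemma \ref{l:statsol}\,(iii) plus the remark that equal masses make it impossible for one density to dominate) and by the phase-plane discussion of the first integral \eqref{equ.solutioncurve}. For the lane-formation half you follow essentially the paper's route, but your variant via the strictly increasing quantity $w=\frac{b-r}{1-\rho}$, obtained by subtracting \eqref{equ.system3_r} from \eqref{equ.system3_b}, is a clean refinement: it gives at most one sign change of $b-r$ directly, so the interior crossing follows from the equal-mass constraint alone, with Lemma \ref{l:statsol}\,(iii) quoted only for the sign conditions $\partial_y r<0$, $\partial_y b>0$ that Definition \ref{d:lanes} itself demands. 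The existence half is where you genuinely go beyond the paper: the reduction to $u=\frac{r}{1-\rho}$, $v=\frac{b}{1-\rho}$, $s=\frac{uv}{1+u+v}$, the positive determinant $\gamma_0^2+\gamma_0\frac{\gamma_1+\gamma_2}{2}\frac{u(1+u)+v(1+v)}{(1+u+v)^2}$, the reflection symmetry $(r,b)(y)\mapsto (b,r)(-y)$, and the shooting/intermediate-value argument in $\lambda$ together constitute an actual construction of the non-trivial states, which the paper only asserts via the solution curves of Figure \ref{f:stat1}. Your intermediate claims check out: since \eqref{equ.system3_rb} is already the once-integrated, flux-zero form of the stationary problem, ODE solutions automatically satisfy the no-flux conditions and shooting requires no boundary matching; the axes $\{u=0\}$, $\{v=0\}$ are invariant sets, so positivity is preserved by uniqueness; and the ``short computation'' you allude to does work, since eliminating $u',v'$ yields $s'\bigl(1+\tfrac{\gamma_1+\gamma_2}{2\gamma_0}(s_u+s_v)\bigr)=\tfrac{(s_v-s_u)\,s}{\gamma_0}$ with $s_v-s_u$ carrying the sign of $u-v$, so $s$ is maximal and hence $\rho$ minimal at the symmetric point $y=0$, giving $m(\lambda)\geq 4\lambda L_y$. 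Two cosmetic points to fix in a final write-up: the masses $M_r, M_b$ in the paper are integrals over the two-dimensional $\Omega$, so an $x$-length factor $2L_x$ belongs in your mass-matching identity; and the admissibility restriction forced by $\rho<1$ (your $M<L_y$, i.e. $2M<|\Omega|$ after normalization) should be stated explicitly as a hypothesis of the existence assertion, since the theorem as printed leaves it implicit.
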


Further properties of solutions to \eqref{equ.system4_rb} and \eqref{equ.system3_rb} can be observed for different asymptotic parameter regimes:
\begin{itemize}
\item $\gamma_0\to0$: We deduce from \eqref{equ.solutioncurve} and \eqref{equ.system3_rb} that $rb\to 0$, i.e \emph{the smaller $\gamma_0$, the sharper the separation of $r$ and $b$}. 
\item $\gamma_1+\gamma_2\to0$: In this case $\rho \equiv \tilde{C}$ for some constant $0<\tilde{C}<1$ and $\pa_y r=C_1r(\tilde{C}-r)$ for some constant $C_1>0$ which corresponds to lane formation.
\end{itemize} 
\smallskip
\begin{remark}
If pedestrians have the preference to step to the left instead of to the right the monotonicity behavior of $r$ and $b$ is reversed.
\end{remark}
\section{Basic properties} \label{basic_properties}
In this section we discuss basic properties of system (\ref{system}). In the following we set $\gamma_0>0$, $\gamma:=\gamma_1=\gamma_2$ and $\alpha=0$. Then system (\ref{system}) reads as
\begin{align}
\label{system_rb}
\begin{aligned}
\pa_tr&=-\pa_x\left((1-\rho)r\right) +\frac{h}{2}\pa_x\left((1-\rho)\pa_x r+r \pa_x \rho\right)\\
&\quad +h\left[\gamma_0\pa_y\left((1-\rho)\pa_y r +r\pa_y\rho\right)+\gamma\pa_y\left((1-\r)\pa_y(rb)+br\pa_y\r\right)\right]\\
\pa_tb &=\pa_x\left((1-\r)b\right)+\frac{h}{2}\pa_x\left((1-\rho)\pa_x b+b \pa_x \rho\right)\\
&\quad +h\left[\gamma_0\pa_y\left((1-\rho)\pa_y b +b\pa_y\rho\right)+\gamma\pa_y\left((1-\r)\pa_y(rb)+br\pa_y\r\right)\right].\\
\end{aligned}
\end{align}
We shall prove global existence of weak solutions of system \eqref{system_rb} in Section \ref{mainresult}, a result which can be extended to the case $\alpha>0$ and $\gamma_1-\gamma_2=\mathcal{O}(h)$. The proof uses several structural features of system \eqref{system_rb}, such as the corresponding entropy functional and the boundedness of solutions, which we discuss in this section.
\bigskip

\subsection{Entropy functional}\label{entropy_disspation}
A key point in the existence analysis are estimates based on the corresponding entropy functional
\begin{align}\label{entropy}
\begin{aligned}
\mathcal{E}:=\int_{\Omega} r(\log r-1)&+b(\log b-1)\,dx\,dy\\
&+\int_{\Omega}\frac{1}{2}(1-\rho)(\log (1-\rho)-1)+\frac{2}{h} r V_r+\frac{2}{h} bV_b \, dx \,dy,
\end{aligned}
\end{align}
where the potentials $V_r(x,y)=-x$ and $V_b(x,y)=x$ correspond to the motion of the red and blue individuals to the right and left respectively.
Note the difference in the prefactor $\frac{1}{2}$ of the entropy term $(1-\rho)(\log(1-\rho)-1)$ compared to other entropies used in the literature for similar PDE models, cf. \cite{MR2745794}, \cite{zamponi2015analysis}. This prefactor results from the anisotropic diffusion as we shall explain in the following.\\
Introducing the entropy variables $u$ and $v$  
\begin{equation*} 
u:=\pa_r \mathcal{E}=\log r -\frac{1}{2}\log(1-\rho)+\frac{2}{h} V_r \quad \text{and} \quad v:=\pa_b \mathcal{E}=\log b -\frac{1}{2}\log(1-\rho)+\frac{2}{h} V_b,
\end{equation*}
we can rewrite \eqref{system_rb} as follows
\begin{align}
\label{system_entropy}
\begin{pmatrix}
\pa_t r \\ \pa_t b
\end{pmatrix} &=\frac{h}{2}\begin{pmatrix}
\nabla & 0\\ 0 & \nabla 
\end{pmatrix} \cdot\left( M\begin{pmatrix}
\pa_x u\\ \pa_y u \\ \pa_x v \\ \pa_y v
\end{pmatrix}
+\begin{pmatrix}
\frac{r}{2}\pa_x \rho \\ \gamma_0 r \pa_y \rho \\ \frac{b}{2}\pa_x \rho \\ \gamma_0 b\pa_y \rho
\end{pmatrix}\right),
\end{align}
where
\[M=\begin{pmatrix}
(1-\rho)r & 0 & 0 & 0\\
0 & 2\gamma_0(1-\rho)r+2\gamma (1-\rho)rb & 0 & 2\gamma (1-\rho)rb \\
0 & 0 & (1-\rho)b & 0\\
0 & 2\gamma (1-\rho)rb & 0 & 2\gamma_0(1-\rho)b+2\gamma (1-\rho)rb
\end{pmatrix}.\]

We observe from equation \eqref{system_entropy} that we do not have a gradient flow structure. The additional terms result from the different structure of the second order terms in \eqref{system_rb}. They are either of the form $r(1-\rho)$, $b(1-\rho)$ or $rb(1-\rho)$, which correspond to different entropies. This lack of structure results in the different prefactor in \eqref{entropy}.\\
Note that the entropy functional (\ref{entropy}) is also not an entropy in the classical sense as we cannot ensure that it is non-increasing. Nevertheless the entropy grows at most linearly in time, which is sufficient for proving existence of global weak solutions.  
\medskip
\begin{lemma}\label{lemma_entropy}
Let $r, b :\Omega \rightarrow \mathbb{R}^2$ be a sufficiently smooth solution to system \eqref{system_rb} satisfying 
\begin{align*}
0\leq r,b \quad \text{ and }\quad \rho\leq 1.
\end{align*}
Then there exists a constant $C\geq 0$ such that
\begin{align}
\label{entropyinequality}
\begin{aligned}
\frac{\mathrm{d}\mathcal{E}}{\mathrm{d}t}+\mathcal{D}_0&\leq C,
\end{aligned}
\end{align}
where 
\begin{equation*}
\mathcal{D}_0=C_0\int_{\Omega} (1-\rho)|\nabla\sqrt{r}|^2 +(1-\rho)|\nabla\sqrt{b}|^2+ |\nabla\sqrt{1-\rho}|^2+|\nabla \rho|^2\,dx\,dy,
\end{equation*}
for some constant $C_0>0$.
\end{lemma}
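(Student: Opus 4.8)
The plan is to differentiate $\mathcal{E}$ along the flow through the entropy-variable formulation. Since $u=\pa_r\mathcal{E}$ and $v=\pa_b\mathcal{E}$, the chain rule gives $\frac{\mathrm{d}\mathcal{E}}{\mathrm{d}t}=\int_\Omega u\,\pa_t r+v\,\pa_t b\,dx\,dy$. I would substitute the entropy form \eqref{system_entropy} and integrate by parts: the no-flux conditions at $y=\pm L_y$ annihilate the $y$-components of the fluxes while periodicity in $x$ cancels the remainder, so no boundary terms survive. Writing $\xi=(\pa_x u,\pa_y u,\pa_x v,\pa_y v)^{\!\top}$ and $P=(\tfrac r2\pa_x\r,\gamma_0 r\pa_y\r,\tfrac b2\pa_x\r,\gamma_0 b\pa_y\r)^{\!\top}$, this yields
\[
\frac{\mathrm{d}\mathcal{E}}{\mathrm{d}t}=-\frac h2\int_\Omega \xi^{\!\top}M\xi+\xi^{\!\top}P\,dx\,dy.
\]
Completing the square in the $(\pa_y u,\pa_y v)$-block exhibits $\xi^{\!\top}M\xi$ as a sum of the nonnegative terms $(1-\r)r(\pa_x u)^2$, $(1-\r)b(\pa_x v)^2$, $2\gamma_0(1-\r)r(\pa_y u)^2$, $2\gamma_0(1-\r)b(\pa_y v)^2$ and $2\gamma(1-\r)rb(\pa_y u+\pa_y v)^2$, so $M\ge0$ whenever $0\le r,b$ and $\r\le1$.

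The core of the argument is to insert $\nabla u=\tfrac{\nabla r}{r}+\tfrac12\tfrac{\nabla\r}{1-\r}+\tfrac2h\nabla V_r$ and the analogous expression for $v$, and crucially to combine the $r$- and $b$-contributions \emph{before} estimating. The mixed terms coupling $\tfrac{\nabla\r}{1-\r}$ with the factors $\tfrac{\nabla r}{r},\tfrac{\nabla b}{b}$ then collapse, since $r\,\tfrac{\nabla r}{r}+b\,\tfrac{\nabla b}{b}=\nabla\r$, into positive multiples of $|\nabla\r|^2$, while the pure $\tfrac{\nabla\r}{1-\r}$ terms combine with weight $r+b=\r$. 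Using the identities $(1-\r)\tfrac{|\nabla r|^2}{r}=4(1-\r)|\nabla\sqrt r|^2$ and $\tfrac{|\nabla\r|^2}{1-\r}=4|\nabla\sqrt{1-\r}|^2$, together with the elementary bound $3-2\r\ge1$ for $\r\le1$, one checks that the $\r$-dependent part of $\xi^{\!\top}M\xi+\xi^{\!\top}P$ dominates a fixed positive multiple of the integrand of $\mathcal{D}_0$; the $|\nabla\r|^2$ contribution is immediate from the combined cross terms, and $|\nabla\sqrt{1-\r}|^2$ from the coefficient $3-2\r$. This is exactly where the prefactor $\tfrac12$ in \eqref{entropy} enters: it is what renders these combinations sign-definite despite the absence of a genuine gradient-flow structure.

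The only terms that are not sign-definite come from the drift potentials $\nabla V_{r}=-(1,0)$, $\nabla V_{b}=(1,0)$, which enter through the $x$-derivatives alone. They produce a nonnegative contribution $\tfrac{4}{h^2}(1-\r)\r$ together with linear-in-gradient terms of the type $\tfrac1h(1-\r)\pa_x r$ and $\tfrac1h r\,\pa_x\r$. I would absorb each of these by Young's inequality, sending its derivative part into a small fraction of the clean squares already extracted and, since $0\le r,b$ and $\r\le1$ keep all remaining prefactors bounded, bounding the residual by a constant times $|\Omega|$; after integration these residuals sum to the finite constant $C$, which is precisely how the perturbed structure forces $\mathcal{E}$ to grow at most linearly rather than to be monotone. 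The main obstacle is the bookkeeping of this last step: the Young weights must be chosen small enough that, after all absorptions into the four squares making up $\mathcal{D}_0$, a strictly positive multiple $C_0$ still survives.
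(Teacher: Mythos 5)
Your proposal is correct, and while it starts from the same entropy-dissipation identity as the paper, it resolves the indefinite coupling terms by a genuinely different decomposition. Both arguments begin identically: differentiate $\mathcal{E}$ via $u,v$, insert \eqref{system_entropy}, integrate by parts, and arrive at $\frac{\mathrm{d}\mathcal{E}}{\mathrm{d}t}=-\frac{h}{2}\int_\Omega \big(\xi^{T}M\xi+P\cdot\xi\big)\,dx\,dy$ with your $\xi$ and $P$. The divergence point is the indefinite term $\frac12\int_\Omega\pa_x\rho\,(r\pa_x u+b\pa_x v)\,dx\,dy$. The paper stays in the entropy variables: it isolates the remainder $\frac1h\int_\Omega\pa_x\rho\,(r-b)$, integrates by parts to trade it for $\frac1h\int_\Omega(1-\rho)\pa_x(r-b)$, re-substitutes $u,v$ to get the self-referential identity $\frac{3}{2h}\int_\Omega\pa_x\rho\,(r-b)=\frac1h\int_\Omega(1-\rho)(r\pa_xu-b\pa_xv)+\frac{2}{h^2}\int_\Omega\rho(1-\rho)$, and only then applies Young's inequality, passing to the square-root variables at the very end. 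You instead substitute $\nabla u=\frac{\nabla r}{r}+\frac{\nabla\rho}{2(1-\rho)}+\frac2h\nabla V_r$ (and the analogue for $v$) at once, and your claimed collapse of the mixed terms is exactly right: in the $x$-direction the non-drift part of $\xi^{T}M\xi+P\cdot\xi$ equals pointwise $4(1-\rho)\big(|\pa_x\sqrt r|^2+|\pa_x\sqrt b|^2\big)+\tfrac32(\pa_x\rho)^2+2\rho\,|\pa_x\sqrt{1-\rho}|^2$, which, using $(\pa_x\rho)^2=4(1-\rho)|\pa_x\sqrt{1-\rho}|^2$, dominates $(\pa_x\rho)^2+2|\pa_x\sqrt{1-\rho}|^2$ --- this is your $3-2\rho\geq 1$ bookkeeping, and it is indeed where the prefactor $\tfrac12$ in \eqref{entropy} is spent; the $y$-direction is the same computation weighted by $2\gamma_0$ plus the manifestly nonnegative $2\gamma(1-\rho)rb(\pa_yu+\pa_yv)^2$. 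The surviving drift remainders, of the form $\frac{C}{h}(1-\rho)\pa_x(b-r)+\frac{C}{h}\pa_x\rho\,(b-r)+\frac{4}{h^2}\rho(1-\rho)$, are then harmless as you say: the last even has a favorable sign in your route (whereas in the paper's identity it surfaces as the constant $\frac{2}{h^2}\int_\Omega\rho(1-\rho)\leq \hat C$), and the gradient-linear ones are absorbed by Young into the clean squares, with residuals bounded by $C|\Omega|$ thanks to $0\leq r,b$ and $\rho\leq1$ (e.g.\ writing $(1-\rho)\pa_x r=2\sqrt r\,(1-\rho)\pa_x\sqrt r$). What the paper's detour buys is transparency of where the linear-in-time growth originates; what your route buys is shorter bookkeeping and pointwise visibility of sign-definiteness, without the re-substitution trick. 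One caveat you share with the paper: both arguments assert that integration by parts leaves no boundary contribution, yet $u,v$ contain $\frac2hV_{r,b}=\mp\frac{2}{h}x$, which is not $x$-periodic, so the boundary terms on $x=\pm L_x$ do not cancel verbatim and should be handled together with the drift (or the entropy defined through the force rather than the potential); this affects both proofs equally and is not a defect specific to yours.
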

\smallskip
\begin{proof}
System (\ref{system_entropy}) enables us to deduce the entropy dissipation relation:
\begin{align}\label{equ_1}
\begin{aligned}
\frac{\mathrm{d}\mathcal{E}}{\mathrm{d}t} &=\int_{\Omega} (u\, \pa_t r +v\, \pa_t b ) dx\,dy\\
&=-\frac{h}{2}\int_{\Omega}  M \begin{pmatrix}
\nabla u \\ \nabla v
\end{pmatrix}\cdot \begin{pmatrix}
\nabla u \\ \nabla v
\end{pmatrix}+\begin{pmatrix}
\frac{r}{2}\pa_x \rho \\ \gamma_0 r \pa_y \rho \\ \frac{b}{2}\pa_x \rho \\ \gamma_0 \pa_y \rho
\end{pmatrix}\cdot \begin{pmatrix}
\nabla u \\ \nabla v
\end{pmatrix}\,dx\,dy\\
&=-\frac{h}{2}\int_{\Omega} \left[ (1-\rho)(r(\pa_x u)^2 +b(\pa_x v)^2) +\frac{1}{2}\pa_x \rho (r \pa_x u+b \pa_x v)\right.\\
&\qquad\qquad +2\gamma_0\left((1-\rho)(r(\pa_y u)^2 +b(\pa_y v)^2) +\frac{1}{2}\pa_y \rho (r \pa_y u+b \pa_y v)\right)\\
&\qquad\qquad +2\gamma(1-\rho)rb((\pa_y u+\pa_y v)^2\bigg]\,dx\,dy,
\end{aligned}
\end{align}
where we have used integration by parts.
For the non-quadratic term in $x$-direction, we use the fact that
\[\pa_x u=\frac{\pa_x r}{r}+\frac{\pa_x \rho}{2(1-\rho)}-\frac{2}{h},\qquad \pa_x v=\frac{\pa_x b}{b}+\frac{\pa_x \rho}{2(1-\rho)}+\frac{2}{h},\]
and deduce that
\begin{align}\label{equ_2}
\begin{aligned}
-\frac{1}{2}\int_{\Omega} \pa_x \rho( r \pa_x u+ b \pa_x v)\, dx\,dy&=-\frac{1}{2}\int_{\Omega} \left(1+\frac{\rho}{2(1-\rho)}\right)(\pa_x \rho)^2\,dx\,dy\\
&\quad+\frac{1}{h}\int_{\Omega} \pa_x \rho (r-b)\, dx\,dy.
\end{aligned}
\end{align}
The first term on the right hand side is negative, for the second we derive that
\begin{align*}
\frac{1}{h}\int_{\Omega} \pa_x \rho (r-b)\,dx\,dy&=\frac{1}{h}\int_{\Omega} (1-\rho)(\pa_x r-\pa_x b)\\
&=\frac{1}{h}\int_{\Omega} (1-\rho)\left(r\pa_x u-b\pa_x v-\frac{r-b}{2(1-\rho)}\pa_x \rho +\frac{2}{h}\rho\right)\,dx\,dy\\
&=\frac{1}{h}\int_{\Omega} (1-\rho)(r\pa_x u-b\pa_x v)\,dx\,dy-\frac{1}{2h}\int \pa_x \rho(r-b)\,dx\,dy\\
&\quad +\frac{2}{h^2}\int \rho (1-\rho)\,dx\,dy.
\end{align*}
Therefore we obtain
\begin{align*}
 \frac{3}{2h}\int_{\Omega} \pa_x \rho (r-b)\,dx\,dy &=\frac{1}{h}\int_{\Omega} (1-\rho)(r\pa_x u-b\pa_x v)\,dx\,dy+\frac{2}{h^2}\int_{\Omega} \rho (1-\rho)\,dx\,dy.
\end{align*}
As $0\leq \rho \leq 1$ and the integration is over a bounded domain, there is a positive constant $\hat{C}$ such that
\begin{equation*}
\frac{2}{h^2}\int_{\Omega} \rho (1-\rho)\,dx\,dy \leq \hat{C}.
\end{equation*}
Applying Young's inequality, we get
\begin{align*}
\int_{\Omega} (1-\rho)r \pa_x u \,dx\,dy&\leq \frac{1}{h}\int_{\Omega} (1-\rho)r \,dx\,dy+\frac{h}{4}\int_{\Omega} (1-\rho)r(\pa_x u)^2 \,dx\,dy
\end{align*}
and therefore
\begin{align*}
\int_{\Omega} (1-\rho)(r \pa_x u- b\pa_x v)\,dx\,dy&\leq \, \frac{1}{h}\int_{\Omega} \rho (1-\rho) \,dx\,dy\\
&\quad +\frac{h}{4}\int_{\Omega} (1-\rho)(r(\pa_x u)^2+b(\pa_x v)^2) \,dx\,dy.
\end{align*}
Altogether we deduce the following estimate from \eqref{equ_2}:
\begin{align*}
-\frac{1}{2}\int_{\Omega} \pa_x \rho( r \pa_x u+ b \pa_x v)\, dx\,dy&\leq -\frac{1}{2}\int_{\Omega} \left(1+\frac{\rho}{2(1-\rho)}\right)(\pa_x \rho)^2\,dx\,dy\\
&\quad +\frac{1}{6}\int_{\Omega} (1-\rho)(r(\pa_x u)^2+b (\pa_x v)^2)\,dx\,dy\\
&\quad +\frac{2}{h^2}\int_{\Omega} \rho (1-\rho)\,dx\,dy.
\end{align*}
We use the same arguments for the term $-\frac{1}{2}\int_{\Omega} \pa_y \rho( r \pa_y u+ b \pa_y v)\, dx\,dy$ and obtain the following entropy dissipation from \eqref{equ_1}: 
\begin{align*}
\begin{aligned}
\frac{\mathrm{d}\mathcal{E}}{\mathrm{d}t} &=\int_{\Omega} (u\, \pa_t r +v\, \pa_t b )  dx\,dy\\
&\leq   -\frac{h}{2}\int_{\Omega} \left[\frac{5}{6}(1-\rho)(r(\pa_x u)^2+b (\pa_x v)^2)\right.+\frac{1}{2} \left(1+\frac{\rho}{2(1-\rho)}\right)\left((\pa_x \rho)^2+2\gamma_0(\pa_y \rho)^2\right)\\
&\qquad\qquad -(1+2\gamma_0)\frac{2}{h^2} \rho (1-\rho)+\frac{5}{3}\gamma_0(1-\rho)(r(\pa_y u)^2+b (\pa_y v)^2)\\
&\qquad\qquad +2\gamma(1-\rho)rb((\pa_y u+\pa_y v)^2\bigg]\,dx\,dy\leq \tilde{C}.
\end{aligned}
\end{align*}
For the analysis it will be sufficient to use a reduced version of the entropy inequality, given by
\begin{align*}
\begin{aligned}
\frac{\mathrm{d}\mathcal{E}}{\mathrm{d}t} &\leq   -C_0\int_{\Omega} (1-\rho)(r|\nabla u|^2+b |\nabla v|^2)+ |\nabla \rho|^2\,dx\,dy+\tilde{C}=:\tilde{\mathcal{D}}_0,
\end{aligned}
\end{align*}
where $C_0:=\frac{h}{2}\min(\frac{1}{2},\gamma_0)$. Using the definitions of $u$ and $v$, applying Young's inequality to estimate the mixed terms as well as the fact that
\begin{align*}
&r(1-\rho)\left|\nabla\left(\log\frac{r}{2(1-\rho)}\right)\right|^2+b(1-\rho)\left|\nabla\left(\log\frac{b}{2(1-\rho)}\right)\right|^2\\
=\,&4(1-\rho)\left|\nabla\sqrt{r}\right|^2+4(1-\rho)\left|\nabla\sqrt{b}\right|^2+4\rho\left|\nabla\sqrt{1-\rho}\right|^2+2\left|\nabla\rho\right|^2,
\end{align*}
we obtain 
\begin{align*}
\tilde{\mathcal{D}}_0&\leq-C_0\int_{\Omega} 2(1-\rho)|\nabla\sqrt{r}|^2 +2(1-\rho)|\nabla\sqrt{b}|^2+2\rho |\nabla\sqrt{1-\rho}|^2+|\nabla \rho|^2\,dx\,dy\nonumber\\
&\quad +\frac{4}{h^2}C_0\int_{\Omega} (1-\rho)(r|\nabla V_r|^2 +b|\nabla V_b|^2)\,dx\,dy-C_0\int_{\Omega} |\nabla \rho|^2\,dx\,dy+\tilde{C}\nonumber.
\end{align*}
Since $|\nabla V_r|^2=|\nabla V_b|^2=1$
and
$2\rho |\nabla\sqrt{1-\rho}|^2+|\nabla \rho|^2\geq 2|\nabla\sqrt{1-\rho}|^2,$
we get the estimate
\begin{align}
\label{entropy_ineq2}
\begin{aligned}
\frac{\mathrm{d}\mathcal{E}}{\mathrm{d}t}&\leq -C_0\int_{\Omega} (1-\rho)(|\nabla\sqrt{r}|^2 +|\nabla\sqrt{b}|^2)+|\nabla\sqrt{1-\rho}|^2+|\nabla \rho|^2\,dx\,dy+C
\end{aligned}
\end{align}
for some constant $C\geq 0$, which concludes the proof.
\end{proof}

\subsection{Positivity}\label{positivity}
We want the global weak solution of system \eqref{system_rb} to satisfy $0\leq r(t),\,b(t),\,\rho(t)\leq 1$ for all $t>0$, if the latter condition is prescribed for the initial data. System \eqref{system_rb} can be written in the form
\begin{align*}
\begin{aligned}
\begin{pmatrix}
\pa_t r\\ \pa_t b
\end{pmatrix} =\begin{pmatrix}
\nabla & 0\\0 & \nabla
\end{pmatrix}\cdot\left( A(r,b) \begin{pmatrix}
\nabla r\\\nabla b
\end{pmatrix}+\begin{pmatrix}
(1-\rho)r\nabla V_r\\(1-\rho)b\nabla V_b
\end{pmatrix}\right),
\end{aligned}
\end{align*}  
where $A=A(r,b)$ is the diffusion matrix given by
\[A(r,b)=\frac{h}{2} \begin{pmatrix}
(1-b) & 0 & r & 0\\
0 & 2\gamma(1-b)b+2\gamma_0(1-b) & 0 &2\gamma(1-r)r+2\gamma_0 r\\
b & 0 & (1-r) & 0\\
0 & 2\gamma(1-b)b+2\gamma_0 b & 0 & 2\gamma(1-r)r+2\gamma_0(1-r)\\
\end{pmatrix}.\]

Note that the diffusion matrix is neither symmetric nor positive definite in general. 
Hence, we cannot use the maximum principle to prove nonnegativity and boundedness of $r$, $b$ and $\rho$. However, the system allows to use a more direct approach to deduce upper and lower bounds for the variables $r$, $b$ and $\rho$. 
We therefore consider the entropy density
\[E:\mathcal{M}\to\mathbb{R},\begin{pmatrix}
r\\b
\end{pmatrix}\mapsto r(\log r-1) + b (\log b-1) + \frac{1}{2} (1-\rho)(\log (1-\rho)-1)+\frac{2}{h} r V_r+\frac{2}{h} bV_b,\] 
where 
\begin{equation}\label{equ:set}
\mathcal{M}=\left\{\begin{pmatrix}
r\\b
\end{pmatrix}\in \mathbb{R}^2:r>0,b>0,r+b<1\right\}.
\end{equation}
\smallskip
\begin{lemma}\label{lemma1}
The function $E:\mathcal{M}\to \mathbb{R}$ is strictly convex and belongs to $C^2(\mathcal{M})$. Its gradient $DE:\mathcal{M}\to \mathbb{R}^2$ is invertible and the inverse of the Hessian $D^2 E:\mathcal{M}\to \mathbb{R}^{2\times 2}$ is uniformly bounded.
\end{lemma}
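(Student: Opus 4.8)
The plan is to proceed by direct computation, since $\mathcal{M}$ is an open convex subset of $\mathbb{R}^2$ and $E$ is built from logarithms. First I would compute the gradient, which by construction recovers the entropy variables introduced earlier: $DE=(\partial_r E,\partial_b E)=(u,v)$ with $u=\log r-\tfrac12\log(1-\rho)+\tfrac2h V_r$ and $v=\log b-\tfrac12\log(1-\rho)+\tfrac2h V_b$. Because $r,b,1-\rho>0$ throughout $\mathcal{M}$, each logarithm is smooth there, so $E\in C^2(\mathcal{M})$ is immediate. Differentiating once more gives the Hessian
\[
D^2E=\begin{pmatrix}\dfrac1r+\dfrac{1}{2(1-\rho)} & \dfrac{1}{2(1-\rho)}\\[2mm] \dfrac{1}{2(1-\rho)} & \dfrac1b+\dfrac{1}{2(1-\rho)}\end{pmatrix}.
\]
Writing $a=1/r$, $c=1/b$, $m=1/(2(1-\rho))$, all strictly positive on $\mathcal{M}$, the trace $a+c+2m$ and the determinant $ac+m(a+c)$ are strictly positive, hence $D^2E$ is positive definite and $E$ is strictly convex.

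Next I would establish invertibility of $DE$ by exhibiting the inverse explicitly, rather than appealing to an abstract monotonicity argument. Setting $\tilde u=u-\tfrac2h V_r$ and $\tilde v=v-\tfrac2h V_b$, the gradient equations rearrange to $r=e^{\tilde u}\sqrt{1-\rho}$ and $b=e^{\tilde v}\sqrt{1-\rho}$. Summing these and writing $s=\sqrt{1-\rho}$ and $p=e^{\tilde u}+e^{\tilde v}>0$ reduces everything to the scalar quadratic $s^2+ps-1=0$, whose unique positive root $s=\tfrac12\bigl(-p+\sqrt{p^2+4}\bigr)$ lies in $(0,1)$ for every $p>0$. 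This determines $(r,b)\in\mathcal{M}$ uniquely from any prescribed $(u,v)\in\mathbb{R}^2$, so $DE\colon\mathcal{M}\to\mathbb{R}^2$ is a bijection; injectivity alone also follows from strict convexity.

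Finally, the uniform bound on $(D^2E)^{-1}$ I would obtain by inverting the $2\times2$ Hessian and estimating its entries in the notation above,
\[
(D^2E)^{-1}=\frac{1}{ac+m(a+c)}\begin{pmatrix} c+m & -m\\ -m & a+m\end{pmatrix}.
\]
Grouping the denominator as $a(c+m)+mc$ bounds the $(1,1)$ entry by $\tfrac{c+m}{a(c+m)}=\tfrac1a=r<1$, and symmetrically the $(2,2)$ entry by $b<1$; the off-diagonal entry is bounded by $\tfrac{m}{m(a+c)}=\tfrac{1}{a+c}=\tfrac{rb}{\rho}<1$. Thus every entry stays below $1$ independently of $(r,b)\in\mathcal{M}$, giving a uniform bound on $(D^2E)^{-1}$.

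I expect this last step to be the main point of the lemma. The individual Hessian entries blow up at the boundary of $\mathcal{M}$ as $r\to0$, $b\to0$, or $\rho\to1$, yet the inverse remains controlled because the dangerous factors appear in both numerator and denominator and cancel; the only real care needed is to group the denominator so those factors divide out cleanly, which the estimates above accomplish. This uniform boundedness is precisely the structural property that makes the boundedness-by-entropy formulation usable in the global existence proof of Section \ref{mainresult}.
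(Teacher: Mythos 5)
Your proof is correct, and it follows the paper's overall strategy of direct computation, but it is genuinely sharper in two places. For the invertibility of $DE$, the paper eliminates variables asymmetrically: it forms $u-v=\log(r/b)-4x/h$ and $u+v=\log\bigl(rb/(1-\rho)\bigr)$, solves for $r$ in two different ways, and arrives at a quadratic in $b$ whose unique positive root yields an explicit (and rather unwieldy) formula for $b(u,v)$ and then $r(u,v)$. Your substitution $r=e^{\tilde u}\sqrt{1-\rho}$, $b=e^{\tilde v}\sqrt{1-\rho}$ reduces the same inversion to the single symmetric scalar quadratic $s^2+ps-1=0$ in $s=\sqrt{1-\rho}$ with $p=e^{\tilde u}+e^{\tilde v}$, and the elementary check $s\in(0,1)$ for all $p>0$ immediately places $(r,b)$ in $\mathcal{M}$; this buys a cleaner, symmetric derivation of the same explicit inverse. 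For the uniform boundedness of $(D^2E)^{-1}$, the paper computes the same Hessian but then merely asserts that since $0<r,b,\rho<1$ the inverse exists and is bounded on $\mathcal{M}$; your entry-wise estimates — grouping the determinant as $a(c+m)+mc$ so the factors that blow up at $\partial\mathcal{M}$ cancel, and bounding the entries by $r$, $b$ and $rb/\rho$, all below $1$ — supply exactly the computation the paper leaves implicit, and they correctly locate the structural point of the lemma, namely that the inverse stays controlled even though the Hessian entries degenerate as $r\to 0$, $b\to 0$ or $\rho\to 1$. You also verify strict convexity explicitly via the trace and determinant of $D^2E$, a claim the paper states in the lemma but does not argue in its proof. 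In short: same method as the paper, with a slicker inversion and with the quantitative step the paper omits made precise.
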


\begin{proof}
The invertibility of $D E$ can be shown directly. Using the definitions of the entropy variables $u$ and $v$, we get
\[u-v=\log\frac{r}{b}-\frac{4x}{h}\quad \text{ and }\quad u+v=\log \frac{rb}{1-\rho}.\]
Solving these relations for $r$ gives 
\[r=be^{\frac{4x}{h}}e^{u-v}\quad \text{ and }\quad r=\frac{(1-b)e^{u+v}}{b+e^{u+v}},\]
which leads to a quadratic equation in $b$ with exactly one positive solution
\[b=b(u,v)=-\frac{1}{2}\left(e^{u+v}+e^{-\frac{4x}{h}}e^{2v}\right)+\left(\frac{\left(e^{u+v}+e^{-\frac{4x}{h}}e^{2v}\right)^2}{4}+e^{-\frac{4x}{h}}e^{2v}\right)^\frac{1}{2},\]
and therefore
\[r=r(u,v)=-\frac{1}{2}\left(e^{u+v}+e^{\frac{4x}{h}}e^{2u}\right)+\left(\frac{\left(e^{u+v}+e^{\frac{4x}{h}}e^{2u}\right)^2}{4}+e^{\frac{4x}{h}}e^{2u}\right)^\frac{1}{2}.\]
Simple calculations ensure that $\begin{pmatrix}
r\\b
\end{pmatrix}\in\mathcal{M}$.\\
To show the uniform boundedness of the inverse of $D^2 E:\mathcal{M}\to \mathbb{R}^{2\times2}$, we observe that 
\[D E\begin{pmatrix}
r\\b
\end{pmatrix}=\begin{pmatrix}
u\\v
\end{pmatrix} \quad\text{ and }\quad D^2 E\begin{pmatrix}
r\\b
\end{pmatrix}=\begin{pmatrix}
\pa_r u & \pa_b u\\\pa_r v&\pa_b v
\end{pmatrix}=\begin{pmatrix}
\frac{1}{r}+\frac{1}{2(1-\rho)} & \frac{1}{2(1-\rho)}\\\frac{1}{2(1-\rho)}&\frac{1}{b}+\frac{1}{2(1-\rho)}
\end{pmatrix}.\]
Since $0<r,b,\rho< 1$, we can deduce that the inverse of $D^2 E$ exists and is bounded in $\mathcal{M}$.
\end{proof}

\noindent Hence, Lemma \ref{lemma1} ensures that if there exists a weak solution $(u,v) \in L^2(0,T;H^1(\Omega,\mathbb{R}^2))$ to \eqref{system3}, the original variables $\begin{pmatrix}
r\\b
\end{pmatrix}= (DE)^{-1}\begin{pmatrix}
u\\v
\end{pmatrix}$ satisfy $\begin{pmatrix}
r(\cdot,\cdot,t)\\b(\cdot,\cdot,t)
\end{pmatrix} \in\mathcal{M}$ for $t>0$ almost everywhere. This gives us $L^{\infty}$-bounds, necessary for the global in time existence proof in the following section.

\section{Main result}\label{mainresult}
We start this section by stating the notion of weak solutions to system \eqref{system_rb}.
\medskip
\begin{definition}
A function  $(r,b):\Omega\times (0,T)\to \overline{\mathcal{M}}$ is called a weak solution to system \eqref{system_rb} if it satisfies the formulation 
\begin{align}\label{theorem1_1}
\int_0^T \begin{pmatrix}
\pa_t r \\ \pa_t b
\end{pmatrix}\cdot\begin{pmatrix}
\Phi_1\\\Phi_2
\end{pmatrix}\,dt&+\frac{h}{2}\int_0^T\int_{\Omega}
\begin{pmatrix}
\pa_x r(1-\rho)+r \pa_x \rho\\ \pa_x b (1-\rho)+b\pa_x \rho
\end{pmatrix}\cdot\begin{pmatrix}
\pa_x \Phi_1\\ \pa_x \Phi_2
\end{pmatrix}\,dx\,dy\,dt\nonumber\\
&+h\int_0^T \int_{\Omega}\gamma_0\begin{pmatrix}
\pa_y r(1-\rho)+r\pa_y \rho\\ \pa_y b(1-\rho)+b\pa_y \rho
\end{pmatrix}\cdot\begin{pmatrix}
\pa_y \Phi_1\\ \pa_y \Phi_2
\end{pmatrix}\,dx\,dy\,dt\\
&+h\int_0^T \int_{\Omega}\gamma\begin{pmatrix}
\pa_y (r b)(1-\rho)+r b\pa_y \rho\\ \pa_y (r b)(1-\rho)+r b\pa_y \rho
\end{pmatrix}\cdot\begin{pmatrix}
\pa_y \Phi_1\\ \pa_y \Phi_2
\end{pmatrix}\,dx\,dy\,dt\nonumber\\
&+\int_0^T\int_{\Omega}\begin{pmatrix}
(1-\rho)r\nabla V_r\\(1-\rho)b\nabla V_b
\end{pmatrix}\cdot\begin{pmatrix}
\nabla \Phi_1\\ \nabla \Phi_2
\end{pmatrix}\,dx\,dy\,dt=0,\nonumber
\end{align}
for all $\Phi_1,\Phi_2\in L^2(0,T;H^1(\Omega))$. 
\end{definition}
\medskip
\begin{theorem}{(Global existence)}\label{theorem1}
Let $T>0$, let $(r_0,b_0):\Omega \to \mathcal{M}$, where $\mathcal{M}$ is defined by \eqref{equ:set}, be a measurable function such that $E(r_0,b_0)\in L^1(\Omega)$. Then there exists a weak solution $(r,b):\Omega\times (0,T)\to \overline{\mathcal{M}}$ in the sense of \eqref{theorem1_1} with periodic boundary conditions in $x$-direction and no-flux boundary conditions in $y$-direction satisfying 
\begin{align*}
&\pa_t r,\, \pa_t b \in L^2(0,T;H^1(\Omega)'),\\
&\rho,\, \sqrt{1-\rho}\,\in L^2(0,T;H^1(\Omega )),\\
&(1-\rho)\nabla\sqrt{r},\,(1-\rho)\nabla\sqrt{b} \,\in L^2(0,T;L^2(\Omega)).
\end{align*}
Moreover, the weak solution satisfies the following entropy dissipation inequality:
\begin{align}
\label{theorem1_2}
\begin{aligned}
\frac{\mathrm{d}\mathcal{E}}{\mathrm{d}t} +\mathcal{D}_1\leq C,
\end{aligned}
\end{align}
where
\begin{equation*}
\mathcal{D}_1=C_0\int_{\Omega} (1-\rho)^2|\nabla\sqrt{r}|^2 +(1-\rho)^2|\nabla\sqrt{b}|^2+ |\nabla\sqrt{1-\rho}|^2+|\nabla \rho|^2\,dx\,dy
\end{equation*}
and $C_0$ and $C$ are the constants from (\ref{entropy_ineq2}).
\end{theorem}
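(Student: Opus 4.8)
The plan is to follow the boundedness-by-entropy strategy in the entropy variables $(u,v)=DE(r,b)$, combining an implicit time discretization with an $H^1$-regularization. The central device is Lemma \ref{lemma1}: solving for $(u,v)$ rather than $(r,b)$ and recovering $(r,b)=(DE)^{-1}(u,v)\in\overline{\mathcal{M}}$ enforces the box constraints $0\le r,b$ and $\rho\le1$ automatically, so no maximum principle is needed for the non-symmetric, non-positive-definite matrix $A$. First I would fix a time step $\tau=T/N$ and, given $(r^{k-1},b^{k-1})\in\overline{\mathcal{M}}$, seek $(u,v)\in H^1(\Omega)^2$ solving the semi-implicit regularized weak problem: for all $(\Phi_1,\Phi_2)\in H^1(\Omega)^2$,
\begin{align*}
\frac1\tau\int_\Omega\big((r-r^{k-1})\Phi_1+(b-b^{k-1})\Phi_2\big)
+\mathcal{A}\big((u,v),(\Phi_1,\Phi_2)\big)
+\varepsilon\int_\Omega\big(\nabla u\cdot\nabla\Phi_1+u\Phi_1+\nabla v\cdot\nabla\Phi_2+v\Phi_2\big)=0,
\end{align*}
where $(r,b)=(DE)^{-1}(u,v)$ and $\mathcal{A}$ collects the mobility-weighted diffusion terms of \eqref{theorem1_1} rewritten in $(u,v)$. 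The periodic and no-flux boundary conditions produce no boundary contributions in this formulation. The $\varepsilon(-\Delta+\mathrm{id})$ term is the only regularization; unlike in \cite{jungel2014boundedness} no biharmonic operator is introduced.

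To solve this nonlinear time-discrete problem I would set up a fixed-point map on $L^2(\Omega)^2$: given $(\bar u,\bar v)$, freeze the mobility coefficients at $(\bar r,\bar b)=(DE)^{-1}(\bar u,\bar v)$, which are bounded because $(\bar r,\bar b)\in\overline{\mathcal{M}}$, and solve the resulting linear problem. The $H^1$-regularization renders the bilinear form coercive on $H^1(\Omega)^2$, so Lax--Milgram yields a unique $(u,v)$, defining $S(\bar u,\bar v)=(u,v)$. Continuity of $S$ follows from continuity of $(DE)^{-1}$ together with dominated convergence, and compactness of $S:L^2\to L^2$ from the compact embedding $H^1(\Omega)\hookrightarrow\hookrightarrow L^2(\Omega)$. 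A bound on $\|S(\bar u,\bar v)\|_{H^1}$ independent of $(\bar u,\bar v)$, obtained from the Lax--Milgram estimate with the fixed data $(r^{k-1},b^{k-1})/\tau$, furnishes a ball mapped into itself, so Schauder's fixed-point theorem produces a solution $(u^k,v^k)$ of the regularized step.

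Next comes the discrete counterpart of Lemma \ref{lemma_entropy}. Testing the step with $(\Phi_1,\Phi_2)=(u^k,v^k)$ and using convexity of $E$, which gives $\int_\Omega DE(r^k,b^k)\cdot\big((r^k,b^k)-(r^{k-1},b^{k-1})\big)\ge\mathcal{E}(r^k,b^k)-\mathcal{E}(r^{k-1},b^{k-1})$, I would rerun the computation of Lemma \ref{lemma_entropy} at the discrete level to obtain
\begin{align*}
\mathcal{E}(r^k,b^k)+\tau\,\mathcal{D}_0^k+\varepsilon\tau\,\|(u^k,v^k)\|_{H^1}^2\le\mathcal{E}(r^{k-1},b^{k-1})+C\tau.
\end{align*}
Summation in $k$ delivers bounds, uniform in $\tau$ and $\varepsilon$, for $\mathcal{E}$ in $L^\infty(0,T)$, for $(1-\rho)\nabla\sqrt r$, $(1-\rho)\nabla\sqrt b$, $\nabla\sqrt{1-\rho}$ and $\nabla\rho$ in $L^2(0,T;L^2(\Omega))$, and $\sqrt\varepsilon\,\|(u,v)\|_{L^2(0,T;H^1)}\le C$, so the regularization vanishes as $\varepsilon\to0$. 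Testing instead with arbitrary $H^1$ functions and using these bounds together with $0\le r,b,\rho\le1$ controls the discrete time differences of $r$ and $b$ in $L^2(0,T;H^1(\Omega)')$.

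Finally I would form piecewise-constant-in-time interpolants and pass to the limit $(\tau,\varepsilon)\to0$. The main obstacle is compactness: the gradient control of $r$ and $b$ is \emph{degenerate}, weighted by $(1-\rho)$, so classical Aubin--Lions does not apply and one must invoke a generalized (nonlinear) Aubin--Lions lemma to convert the $L^2(H^1')$ time-regularity and the weighted gradient bounds into strong convergence of $r,b$ in $L^2(\Omega\times(0,T))$ and a.e. Granted this, $\rho$ and $\sqrt{1-\rho}$ converge weakly in $L^2(H^1)$, while $\nabla\rho$, $(1-\rho)\nabla\sqrt r$, $(1-\rho)\nabla\sqrt b$ converge weakly in $L^2$. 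Rewriting each flux of \eqref{theorem1_1}, for instance $(1-\rho)\pa_x r+r\pa_x\rho=2(1-\rho)\sqrt r\,\pa_x\sqrt r+r\pa_x\rho$ and the cross term via $(1-\rho)\pa_y(rb)=2b\,(1-\rho)\sqrt r\,\pa_y\sqrt r+2r\,(1-\rho)\sqrt b\,\pa_y\sqrt b$, as products of strongly convergent bounded factors ($r,b,\sqrt r,\sqrt b,1-\rho$) with weakly convergent factors lets me pass to the limit in every term and identify the weak solution. The limiting dissipation $\mathcal{D}_1$ carries the weight $(1-\rho)^2$ rather than $(1-\rho)$ precisely because the only quantity with a weak $L^2$ limit is $(1-\rho)\nabla\sqrt r$ (and its $b$-analogue); weak lower semicontinuity of the $L^2$-norm applied to these, combined with the a.e. convergence of $\rho$, then yields \eqref{theorem1_2}.
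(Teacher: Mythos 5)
Your overall architecture is exactly the paper's: transform to entropy variables via Lemma \ref{lemma1} so that $(r,b)=(DE)^{-1}(u,v)\in\overline{\mathcal{M}}$ enforces the $L^\infty$-bounds, implicit Euler in time with an $H^1(\Omega)$-regularization (the paper simply ties your $\varepsilon$ to $\tau$, which is immaterial), Lax--Milgram for the frozen-coefficient linear problem, Schauder's fixed point theorem in $L^2(\Omega)$, the discrete entropy inequality from convexity of $E$ exactly as in \eqref{equ13}--\eqref{time_entropy}, uniform a priori bounds and control of the discrete time differences in $L^2(0,T;H^1(\Omega)')$, and finally weak lower semicontinuity together with $1-\rho_\tau\geq(1-\rho_\tau)^2$ to obtain \eqref{theorem1_2} with the weight $(1-\rho)^2$. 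All of this is sound and matches the paper's proof.

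There is, however, one genuine gap in your limit passage: you claim that a generalized Aubin--Lions lemma yields strong convergence of $r_\tau,b_\tau$ in $L^2(\Omega\times(0,T))$ and a.e., and you then pass to the limit in the fluxes by writing them as products of \emph{strongly} convergent factors $r,b,\sqrt{r},\sqrt{b}$ with weakly convergent gradients. This step does not go through: the entropy dissipation controls only the \emph{weighted} gradients $(1-\rho_\tau)\nabla\sqrt{r_\tau}$, $(1-\rho_\tau)\nabla\sqrt{b_\tau}$, so on the set where $\rho_\tau$ is close to $1$ there is no gradient information on $r_\tau$ alone, and no Aubin--Lions variant can produce strong compactness for $r_\tau$ itself. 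The generalized lemma actually available (Lemma 7 in \cite{zamponi2015analysis}, hypotheses \eqref{equ14}, \eqref{equ7}, \eqref{equ8}, \eqref{equ15}) gives strong convergence only of the weighted quantities $\sqrt{1-\rho_\tau}\,f(r_\tau,b_\tau)$; classical (discrete) Aubin applies only to $\rho_\tau$, which carries a full $L^2$ gradient bound. The paper therefore works with nothing more than $r_\tau\rightharpoonup r$, $b_\tau\rightharpoonup b$ weakly-$*$ in $L^\infty$ and rewrites each flux so that every product is a strongly convergent weighted factor times a weakly convergent $L^2$ factor, e.g.
\begin{equation*}
\nabla r_\tau(1-\rho_\tau)+r_\tau\nabla\rho_\tau
=\sqrt{1-\rho_\tau}\,\nabla\bigl(\sqrt{1-\rho_\tau}\,r_\tau\bigr)-3\,r_\tau\sqrt{1-\rho_\tau}\,\nabla\sqrt{1-\rho_\tau},
\end{equation*}
using the generalized lemma with $f=r_\tau$ (and $f=r_\tau b_\tau$ for the cross term, $f=\sqrt{r_\tau}$ to identify the weak limit of $(1-\rho_\tau)\nabla\sqrt{r_\tau}$ needed in \eqref{theorem1_2}). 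Note that even the weak limit identification you assert for $(1-\rho_\tau)\nabla\sqrt{r_\tau}$ requires this weighted decomposition once strong convergence of $\sqrt{r_\tau}$ is unavailable. Replacing your strong-convergence claim and your product decompositions (such as $(1-\rho)\partial_y(rb)=2b(1-\rho)\sqrt{r}\,\partial_y\sqrt{r}+2r(1-\rho)\sqrt{b}\,\partial_y\sqrt{b}$, whose factors $b_\tau,\sqrt{r_\tau}$ do not converge strongly) by the weighted ones repairs the argument; everything else in your proposal is correct.
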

\medskip
We would like to mention the different dissipation term in \eqref{theorem1_2}. In particular, since the convergence properties are not strong enough to pass to the limit in the entropy dissipation \eqref{entropyinequality}, we obtain a modified entropy inequality \eqref{theorem1_2}.\\ A major difference in the analysis of the system compared to related ones in the literature (cf. \cite{zamponi2015analysis}, \cite{MR2745794}, \cite{schlake2011mathematical}, \cite{MR2801186}) is the fact that we have an anisotropic diffusion and no gradient flow structure, which requires a different entropy and a priori estimates.\\

The following existence proof is based on an approximation of \eqref{system_rb}. The basis of the approximation argument is the following formulation of system \eqref{system_rb}:

\begin{align} \label{system3}
\begin{aligned}
\begin{pmatrix}
\pa_t r \\ \pa_t b
\end{pmatrix}&=\begin{pmatrix}
\nabla &0\\0&\nabla
\end{pmatrix}\cdot\left( G(r,b)\begin{pmatrix}
\nabla u\\ \nabla v
\end{pmatrix}
+ H(r,b)\right),
\end{aligned}
\end{align} 
where 
\[G=\frac{h}{2}\begin{pmatrix}
(1-\rho)r(1+\frac{1}{2-\rho}r) & 0 & \frac{(1-\rho)rb}{2-\rho} & 0\\
0 & 2(1-\rho)r(\gamma_0(\frac{2-b}{2-\rho})+\gamma b) & 0 & 2(1-\rho)rb(\frac{\gamma_0}{2-\rho}+\gamma)\\
(1-\rho)b(1+\frac{1}{2-\rho}r) & 0 & \frac{(1-\rho)rb}{2-\rho} & 0\\
0 & 2(1-\rho)b(\gamma_0(\frac{2-b}{2-\rho})+\gamma r) & 0 & 2(1-\rho)b(\frac{\gamma_0 b}{2-\rho}+\gamma r)\\
\end{pmatrix}\]
and
\[H=\begin{pmatrix}
(1-\rho)r\frac{r-b}{2-\rho}\\ 0\\ (1-\rho)b\frac{r-b}{2-\rho}\\0
\end{pmatrix}.\]
The positive semi-definiteness of the matrix $G(r,b)$ can be proven using a similar approach as we have seen in Subsection \ref{entropy_disspation}.\\ 
We discretize system \eqref{system3} in time using the implicit Euler scheme with time step $\tau>0$ which results in a recursive sequence of elliptic problems. These are modified by adding higher order regularization terms. The corresponding weak formulation is given by
\begin{align}\label{equ_3}
\begin{aligned}
\frac{1}{\tau}\begin{pmatrix}
r_k-r_{k-1} \\ b_k-b_{k-1}
\end{pmatrix}&=\begin{pmatrix}
\nabla &0\\0&\nabla
\end{pmatrix}\cdot\left( G(r_k,b_k)\begin{pmatrix}
\nabla u_k\\ \nabla v_k
\end{pmatrix}+ H(r_k,b_k)\right)\\
&\quad+\tau\begin{pmatrix}
\Delta u_k+u_k\\\Delta v_k+v_k
\end{pmatrix}.
\end{aligned}
\end{align} 
The regularization guarantees coercivity of the elliptic system in $H^1(\Omega)$. This is in contrast to \cite{zamponi2015analysis} who used a stronger regularization by introducing a Bilaplacian.
The existence proof is divided into several steps. First we show existence of weak solutions to the regularized, discrete in time problem by applying Lax-Milgram to a linearized version of the problem \eqref{equ_3} and using the Schauder fixed point theorem to conclude the existence result for the corresponding nonlinear problem.\\
Finally uniform a priori estimates in $\tau$ and the use of a generalized Aubin-Lions lemma (cf. \cite{zamponi2015analysis}) allow to pass to the limit $\tau \to 0$. Note that one can also use the Kolmogorov-Riesz theorem in a similar fashion to \cite{MR2745794}. \\

\subsection{Time discretization and regularization of system \eqref{system_rb}}
We start by studying the regularized time discrete system. Recall that the entropy variables are defined as $(u,v)=DE(r,b)$ for $(r,b)\in \mathcal{M}$. Lemma \ref{lemma1} ensures that $DE$ is invertible, hence we set $(r,b)=(DE)^{-1}(u,v)$ for $(u,v)\in \mathbb{R}^2$.\\
Let $T>0$, $N\in\mathbb{N}$ and let $\tau=T/N$ be the time step size. We split the time interval into the subintervals
\[(0,T]=\bigcup_{k=1}^N ((k-1)\tau,k\tau],\qquad \tau=\frac{T}{N}.\]
Then for given functions $(r_{k-1}, b_{k-1}) \in \overline{\mathcal{M}}$, which approximate $(r,b)$ at time $\tau(k-1)$, we want to find $(r_k,b_k) \in \overline{\mathcal{M}}$ solving the regularized time discrete problem \eqref{equ_3} in the weak formulation: 
\begin{align}\label{time_discrete_weak}
\begin{aligned}
\frac{1}{\tau}\int_{\Omega}\begin{pmatrix}
r_k-r_{k-1}\\b_k-b_{k-1}
\end{pmatrix}&\cdot\begin{pmatrix}
\Phi_1\\\Phi_2
\end{pmatrix}\,dx\,dy+\int_{\Omega}\begin{pmatrix}
\nabla \Phi_1\\ \nabla \Phi_2
\end{pmatrix}^T G(r_k,b_k)\begin{pmatrix}
\nabla u_k \\ \nabla v_k 
\end{pmatrix}\,dx\,dy\\
+&\int_{\Omega} H(r_k,b_k)\begin{pmatrix}
\nabla \Phi_1\\ \nabla \Phi_2
\end{pmatrix}\,dx\,dy+\tau R\left(\begin{pmatrix}
\Phi_1\\ \Phi_2
\end{pmatrix},\begin{pmatrix}
u_k\\v_k
\end{pmatrix}\right)=0
\end{aligned}
\end{align}
for $(\Phi_1,\Phi_2)\in H^1(\Omega)\times H^1(\Omega)$, where $(r_k,b_k)=DE^{-1}(u_k,v_k)$ and
\begin{align*}
\begin{aligned}
R\left(\begin{pmatrix}
\Phi_1\\ \Phi_2
\end{pmatrix},\begin{pmatrix}
u_k\\v_k
\end{pmatrix}\right)&=\int_{\Omega}
\Phi_1u_k+\Phi_2v_k+\nabla \Phi_1\cdot \nabla u_k+\nabla \Phi_2\cdot \nabla v_k
\,dx\,dy.
\end{aligned}
\end{align*}

Note that it is not immediately evident that we can apply the transformation from Lemma \ref{lemma1} to $(u_k,v_k)$, since the transformation $(u,v)=DE(r,b)$ is only defined for $(r,b)\in \mathcal{M}$ and $(r,b)=DE^{-1}(u,v)$ for $(u,v)\in \mathbb{R}^2$ respectively. Since we only know that $(u,v)\in L^2(\Omega,\mathbb{R}^2)$, we do not have uniform boundedness. However, $u,v$ take values $\pm \infty$ at most on a set of measure zero. Hence we know that $(r,b) \in \mathcal{M}$ a.e., which allows us to apply the variable transformation.\\

We define $\mathcal{F}:\overline{\mathcal{M}}\subseteq L^2(\Omega,\mathbb{R}^2)\to\overline{\mathcal{M}}\subseteq L^2(\Omega,\mathbb{R}^2), (\tilde{r},\tilde{b}) \mapsto (r,b)=DE^{-1}(u,v)$, where
$(u,v)$ is the unique solution in $H^1(\Omega,\mathbb{R}^2)$ to the linear problem
\begin{equation} \label{equ1} 
a((u,v),(\Phi_1,\Phi_2))=F(\Phi_1,\Phi_2) \quad \text{for all }(\Phi_1,\Phi_2)\in H^1(\Omega,\mathbb{R}^2)
\end{equation}
with 
\begin{align*}
&a((u,v),(\Phi_1,\Phi_2))=\int_{\Omega}\begin{pmatrix}
\nabla \Phi_1\\ \nabla \Phi_2
\end{pmatrix}^T G(\tilde{r},\tilde{b})\begin{pmatrix}
\nabla u \\ \nabla v 
\end{pmatrix}\,dx\,dy+\tau R\left(\begin{pmatrix}
\Phi_1\\ \Phi_2
\end{pmatrix},\begin{pmatrix}
u\\v
\end{pmatrix}\right)\\
&F(\Phi_1,\Phi_2)=-\frac{1}{\tau}\int_{\Omega}\begin{pmatrix}
\tilde{r}-r_{k-1}\\\tilde{b}-b_{k-1}
\end{pmatrix}\cdot\begin{pmatrix}
\Phi_1\\\Phi_2
\end{pmatrix}\,dx\,dy+ \int_{\Omega}H(\tilde{r},\tilde{b})\begin{pmatrix}
\nabla \Phi_1\\ \nabla \Phi_2
\end{pmatrix}\,dx\,dy.
\end{align*}
The bilinear form $a:H^1(\Omega;\mathbb{R}^2)\times H^1(\Omega;\mathbb{R}^2)\to \mathbb{R}$ and the functional $F:H^1(\Omega,\mathbb{R}^2)\to \mathbb{R}$ are bounded. Moreover, $a$ is coercive since the positive semi-definiteness of $G(r,b)$ implies that
\begin{align*}
a((u,v),(u,v))&=\int_{\Omega}\begin{pmatrix}
\nabla u\\ \nabla v
\end{pmatrix}^T G(\tilde{r},\tilde{b})\begin{pmatrix}
\nabla u \\ \nabla v 
\end{pmatrix}\,dx\,dy+\tau R\left(\begin{pmatrix}
u\\ v
\end{pmatrix},\begin{pmatrix}
u\\v
\end{pmatrix}\right)\\
&\geq \tau \left(\|u\|_{H^1(\Omega)}^2+\|v\|_{H^1(\Omega)}^2\right).
\end{align*}
Then the Lax-Milgram lemma guarantees the existence of a unique solution $(u,v)\in H^1(\Omega;\mathbb{R}^2)$ to \eqref{equ1}.\\
To apply Schauer's fixed point theorem, we need to show that $\mathcal{F}$ is continuous.
Therefore, let $(\tilde{r}_k,\tilde{b}_k)$ be a sequence in $\overline{\mathcal{M}}$ converging strongly to $(\tilde{r},\tilde{b})$ in $L^2(\Omega,\mathbb{R}^2)$ and let $(u_k,v_k)$ be the corresponding unique solution to \eqref{equ1} in $H^1(\Omega;\mathbb{R}^2)$. We have that $G(\tilde{r}_k,\tilde{b}_k)\to G(\tilde{r},\tilde{b})$ and $H(\tilde{r}_k,\tilde{b}_k)\to H(\tilde{r},\tilde{b})$ strongly in $L^2(\Omega,\mathbb{R}^2)$. As the entropy inequality yields a uniform bound for $(u_k,v_k)$ in $H^1(\Omega;\mathbb{R}^2)$, there exists a subsequence with $(u_k,v_k)\rightharpoonup (u,v)$ weakly in $H^1(\Omega;\mathbb{R}^2)$. In order to identify $(u,v)$ as the solution of \eqref{equ1} with coefficients $(\tilde{r},\tilde{b})$, we first consider problem \eqref{equ1} only for test functions in $(\Phi_1,\Phi_2)\in W^{1,\infty}(\Omega,\mathbb{R}^2)$. Here, the (weak) limit $(u,v)$ is well defined. Then, the $L^{\infty}$ bounds of $G(\tilde{r}_k,\tilde{b}_k)$ allow us to consider the problem \eqref{equ1} for all $(\Phi_1,\Phi_2)\in H^1(\Omega,\mathbb{R}^2)$ applying a density argument. So, the limit $(u,v)$ as the solution of problem \eqref{equ1} with coefficients $(\tilde{r},\tilde{b})$ is well defined.\\
In view of the compact embedding $H^1(\Omega,\mathbb{R}^2)\hookrightarrow L^2(\Omega,\mathbb{R}^2)$, we have a subsequence (not relabeled) with $(u_k,v_k)\to (u,v)$ strongly in $L^2(\Omega,\mathbb{R}^2)$. Since the limit is unique, the whole sequence converges. Together with the property that the map from $(u,v)$ to $(r,b)$ is Lipschitz continuous (cf. Lemma \ref{lemma1}), we have continuity of $\mathcal{F}$.\\
Furthermore, the compact embedding $H^1(\Omega;\mathbb{R}^2) \hookrightarrow L^2(\Omega;\mathbb{R}^2)$ gives the compactness of $\mathcal{F}$. Combined with the property that $\mathcal{F}$ maps a convex, closed set onto itself, we can apply Schauder's fixed point theorem, which assures the existence of a solution $(r,b)\in \overline{\mathcal{M}}$ to  \eqref{equ1} with $(\tilde{r},\tilde{b})$ replaced by $(r,b)$.

The convexity of $E$ implies that $E(\varphi_1)-E(\varphi_2)\leq DE(\varphi_1)\cdot(\varphi_1-\varphi_2)$ for all $\varphi_1,\varphi_2\in\mathcal{M}$. Choosing $\varphi_1=(r_k,b_k)$ and $\varphi_2=(r_{k-1},b_{k-1})$ and using $DE(r_k,b_k)=(u_k,v_k)$, we obtain
\begin{align}\label{equ13}
\frac{1}{\tau}\int_{\Omega}&\begin{pmatrix}
r_k-r_{k-1}\\b_k-b_{k-1}
\end{pmatrix}\cdot\begin{pmatrix}
u_k\\v_k
\end{pmatrix}\,dx\,dy\geq\frac{1}{\tau}\int_{\Omega}\begin{pmatrix}
E(r_k,b_k)-E(r_{k-1},b_{k-1})
\end{pmatrix}\,dx\,dy.
\end{align}
\noindent Employing the test function $(\Phi_1,\Phi_2)=(u_k,v_k)$ in \eqref{time_discrete_weak} and applying \eqref{equ13}, we obtain
\begin{align}\label{time_entropy}
\begin{aligned}
&\quad\int_{\Omega}E(r_k,b_k)\,dx\,dy+\tau\int_{\Omega}\begin{pmatrix}
\nabla u_k\\ \nabla v_k
\end{pmatrix}^T G(r_k,b_k)\begin{pmatrix}
\nabla u_k \\ \nabla v_k 
\end{pmatrix}\,dx\,dy \\
&+\tau \int_{\Omega}  H(r_k,b_k)\begin{pmatrix}
\nabla u_k\\ \nabla v_k
\end{pmatrix}\,dx\,dy+\tau^2 R \left(\begin{pmatrix}
u_k\\v_k
\end{pmatrix},\begin{pmatrix}
u_k\\v_k
\end{pmatrix}\right)\leq\int_{\Omega}E(r_{k-1},b_{k-1})\,dx\,dy.
\end{aligned}
\end{align}
Using the entropy inequality \eqref{entropy_ineq2}, resolving recursion \eqref{time_entropy} leads to
\begin{align}\label{discrete_entropy_ineq}
\begin{aligned}
&\quad\int_{\Omega} E(r_k,b_k)\,dx\,dy+C_0\tau\sum_{j=1}^k\int_{\Omega} (1-\rho_j)|\nabla\sqrt{r_j}|^2\,dx\,dy\\ 
&+C_0\tau\sum_{j=1}^k\int_{\Omega}(1-\rho_j)|\nabla\sqrt{b_j}|^2+ |\nabla\sqrt{1-\rho_j}|^2+|\nabla \rho_j|^2\,dx\,dy\\
&+\tau^2\sum_{j=1}^k R \left(\begin{pmatrix}
u_j\\v_j
\end{pmatrix},\begin{pmatrix}
u_j\\v_j
\end{pmatrix}\right) \leq \int_{\Omega} E(r_0,b_0)\,dx\,dy+T C.
\end{aligned}
\end{align}

\subsection{The limit $\tau \to 0$}\label{limittau}
Let $(r_k,b_k)$ be a sequence of solutions to \eqref{time_discrete_weak}. We define $r_\tau(x,y,t)=r_k(x,y)$ and $b_\tau(x,y,t)=b_k(x,y)$ for $(x,y)\in\Omega$ and $t\in ((k-1)\tau,k\tau]$. Then $(r_\tau,b_\tau)$ solves the following problem, where $\sigma_\tau$ denotes a shift operator, i.e. $(\sigma_\tau r_\tau)(x,y,t)=r_\tau (x,y,t-\tau)$ and $(\sigma_\tau b_\tau)(x,y,t)=b_\tau (x,y,t-\tau)$ for $\tau \leq t\leq T$,
\begin{align}\label{time_discrete_weak_3}
\begin{aligned}
 \frac{1}{\tau}&\int_0^T\int_{\Omega}\begin{pmatrix}
r_\tau-\sigma_\tau r_\tau\\b_\tau-\sigma_\tau b_\tau
\end{pmatrix}\cdot\begin{pmatrix}
\Phi_1\\\Phi_2
\end{pmatrix}\,dx\,dy\,dt\\
&+\frac{h}{2}\int_0^T\int_{\Omega}
\begin{pmatrix}
\pa_x r_\tau(1-\rho_\tau)+r_\tau \pa_x \rho_\tau\\ \pa_x b_\tau (1-\rho_\tau)+b_\tau\pa_x \rho_\tau
\end{pmatrix}\cdot\begin{pmatrix}
\pa_x \Phi_1\\ \pa_x \Phi_2
\end{pmatrix}\,dx\,dy\,dt\\
&+h\int_0^T \int_{\Omega}\gamma_0\begin{pmatrix}
\pa_y r_\tau(1-\rho_\tau)+r_\tau\pa_y \rho_\tau\\ \pa_y b_\tau(1-\rho_\tau)+b_\tau\pa_y \rho_\tau
\end{pmatrix}\cdot\begin{pmatrix}
\pa_y \Phi_1\\ \pa_y \Phi_2
\end{pmatrix}\,dx\,dy\,dt\\
&+h\int_0^T \int_{\Omega}\gamma\begin{pmatrix}
\pa_y (r_\tau b_\tau)(1-\rho_\tau)+r_\tau b_\tau\pa_y \rho_\tau\\ \pa_y (r_\tau b_\tau)(1-\rho_\tau)+r_\tau b_\tau\pa_y \rho_\tau
\end{pmatrix}\cdot\begin{pmatrix}
\pa_y \Phi_1\\ \pa_y \Phi_2
\end{pmatrix}\,dx\,dy\,dt\\
&+\int_0^T\int_{\Omega}\begin{pmatrix}
(1-\rho_\tau)r_\tau\nabla V_r\\(1-\rho_\tau)b_\tau\nabla V_b
\end{pmatrix}\cdot\begin{pmatrix}
\nabla \Phi_1\\ \nabla \Phi_2
\end{pmatrix}\,dx\,dy+\tau R\left(\begin{pmatrix}
\Phi_1\\\Phi_2
\end{pmatrix},\begin{pmatrix}
u_\tau\\v_\tau
\end{pmatrix}\right)\,dt=0,
\end{aligned}
\end{align}

for $(\Phi_1(t),\Phi_2(t))\in L^2(0,T;H^1(\Omega))$. Inequality \eqref{discrete_entropy_ineq} becomes

\begin{align}\label{entropy_ineq_3}
\begin{aligned}
&\quad\int_{\Omega} E(r_\tau(T),b_\tau(T))\,dx\,dy+C_0\int_0^T\int_{\Omega} (1-\rho_\tau)|\nabla\sqrt{r_\tau}|^2\,dx\,dy\,dt\\
&+C_0\int_0^T\int_{\Omega} (1-\rho_\tau)|\nabla\sqrt{b_\tau}|^2+ |\nabla\sqrt{1-\rho_\tau}|^2+|\nabla \rho_\tau|^2\,dx\,dy\,dt\\
&+\tau \int_0^T R\left(\begin{pmatrix}
u_\tau\\v_\tau
\end{pmatrix},\begin{pmatrix}
u_\tau\\v_\tau
\end{pmatrix}\right)\,dt\leq  \int_{\Omega} E(r_0,b_0)\,dx\,dy+T C.
\end{aligned}
\end{align}

\bigskip
The previous inequalities allow us to deduce the following Lemma. Note that from now on $K$ denotes a generic constant independent of $\tau$.  
\begin{lemma}{(A priori estimates)}\label{lemma4}
There exists a constant $K\in\mathbb{R}^+$, such that the following bounds hold:
\begin{align} \label{apriori2}
\begin{aligned}
\|\sqrt{1-\rho_\tau}\nabla\sqrt{r_\tau}\|_{L^2(0,T;L^2(\Omega))}+\|\sqrt{1-\rho_\tau}\nabla\sqrt{b_\tau}\|_{L^2(0,T;L^2(\Omega))}&\leq K, \\
\|\sqrt{1-\rho_\tau}\|_{L^2(0,T;H^1(\Omega))}+\|\rho_\tau\|_{L^2(0,T;H^1(\Omega))}&\leq K, \\
\sqrt{\tau}(\|u_\tau\|_{L^2(0,T;H^1(\Omega))}+\|v_\tau\|_{L^2(0,T;H^1(\Omega))})&\leq K.
\end{aligned}
\end{align}
\end{lemma}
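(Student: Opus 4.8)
The plan is to read off all three estimates directly from the uniform-in-$\tau$ entropy inequality \eqref{entropy_ineq_3}, whose left-hand side already contains, up to the fixed constants $C_0$ and $\tau$, precisely the squared norms appearing in the statement. The only genuine work is to verify that the right-hand side is a finite number independent of $\tau$ and that the one term on the left that is not manifestly nonnegative, namely the final-time entropy $\int_\Omega E(r_\tau(T),b_\tau(T))\,dx\,dy$, admits a uniform lower bound. Once both points are settled, every remaining term on the left is nonnegative and hence individually bounded by the same $\tau$-independent constant $K^2$.

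First I would control the right-hand side of \eqref{entropy_ineq_3}. By hypothesis $E(r_0,b_0)\in L^1(\Omega)$, so $\int_\Omega E(r_0,b_0)\,dx\,dy$ is a fixed finite number and $TC$ is finite; thus the right-hand side is bounded independently of $\tau$. Next I would establish a uniform lower bound for the entropy density $E$ on $\overline{\mathcal{M}}$: for $s\in(0,1]$ one has $s(\log s-1)\geq -1$, and likewise $\tfrac12(1-\rho)(\log(1-\rho)-1)\geq-\tfrac12$; moreover, since $0\le r,b\le 1$ and $\Omega$ is bounded in the $x$-direction, the potential term $\tfrac{2}{h}(rV_r+bV_b)=\tfrac{2}{h}x(b-r)$ is bounded in absolute value by $\tfrac{2}{h}L_x$. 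Hence $E\geq -c_0$ on $\overline{\mathcal{M}}$ for a constant $c_0>0$, so that $\int_\Omega E(r_\tau(T),b_\tau(T))\,dx\,dy\geq -c_0|\Omega|$ uniformly in $\tau$. The pointwise confinement $(r_\tau,b_\tau)\in\overline{\mathcal{M}}$ used here is exactly the one provided by the construction $(r_\tau,b_\tau)=(DE)^{-1}(u_\tau,v_\tau)$ together with Lemma \ref{lemma1}. Moving this term to the right-hand side of \eqref{entropy_ineq_3} produces the desired uniform constant $K^2$.

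It then remains to match each nonnegative left-hand term with a quantity in the lemma. The terms $C_0\int_0^T\int_\Omega(1-\rho_\tau)|\nabla\sqrt{r_\tau}|^2$ and its $b$-analogue are by definition $C_0\|\sqrt{1-\rho_\tau}\nabla\sqrt{r_\tau}\|^2_{L^2(0,T;L^2(\Omega))}$ and $C_0\|\sqrt{1-\rho_\tau}\nabla\sqrt{b_\tau}\|^2_{L^2(0,T;L^2(\Omega))}$, giving the first estimate after taking square roots. For the second estimate, the terms $C_0\int_0^T\int_\Omega|\nabla\sqrt{1-\rho_\tau}|^2$ and $C_0\int_0^T\int_\Omega|\nabla\rho_\tau|^2$ control the $L^2(0,T;L^2(\Omega))$-norms of the gradients; combining these with the pointwise bounds $0\le\sqrt{1-\rho_\tau}\le 1$ and $0\le\rho_\tau\le 1$ on the bounded cylinder $\Omega\times(0,T)$ supplies the missing $L^2$ control of the functions themselves, and hence the full $L^2(0,T;H^1(\Omega))$ bounds. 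Finally, since $R((u_\tau,v_\tau),(u_\tau,v_\tau))=\|u_\tau\|^2_{H^1(\Omega)}+\|v_\tau\|^2_{H^1(\Omega)}$ by the definition of $R$, the regularization term equals $\tau\big(\|u_\tau\|^2_{L^2(0,T;H^1(\Omega))}+\|v_\tau\|^2_{L^2(0,T;H^1(\Omega))}\big)$, and taking square roots yields the third estimate up to the harmless factor $\sqrt{2}$.

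I do not expect any deep obstacle: the lemma is essentially a bookkeeping consequence of \eqref{entropy_ineq_3}. The single point requiring real care is the uniform-in-$\tau$ lower bound on the final-time entropy, because without it the nonnegative dissipation terms could not be isolated on the left; this is precisely where the $L^\infty$-confinement $(r_\tau,b_\tau)\in\overline{\mathcal{M}}$ from Lemma \ref{lemma1} and the boundedness of $\Omega$ are used.
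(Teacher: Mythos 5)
Your proposal is correct and takes essentially the same route as the paper, which deduces all three bounds of Lemma \ref{lemma4} directly from the uniform-in-$\tau$ entropy inequality \eqref{entropy_ineq_3}, whose right-hand side is controlled by $\int_\Omega E(r_0,b_0)\,dx\,dy + TC$. Your explicit verification of the uniform lower bound on the final-time entropy (via $s(\log s-1)\geq -1$, the $L^\infty$-confinement $(r_\tau,b_\tau)\in\overline{\mathcal{M}}$, and boundedness of $\Omega$) makes precise a step the paper uses implicitly in exactly the same way.
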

The bounds in \eqref{apriori2} together with the $L^{\infty}$-bounds for $r_\tau,b_\tau$ and $\rho_\tau$ imply
\begin{align} \label{apriori3}
\begin{aligned}
&\|\nabla r_\tau(1-\rho_\tau)+r_\tau \nabla \rho_\tau\|_{L^2(0,T;L^2(\Omega))}\\
\leq &\, 2\|\sqrt{r_\tau}\|_{L^{\infty}(0,T;L^{\infty}(\Omega))} \|\sqrt{1-\rho_\tau}\|_{L^{\infty}(0,T;L^{\infty}(\Omega))}\|\sqrt{1-\rho_\tau}\nabla\sqrt{r_\tau}\|_{L^2(0,T;L^2(\Omega))}\\
&+\|r_\tau\|_{L^{\infty}(0,T;L^{\infty}(\Omega))}\|\nabla\rho_\tau\|_{L^2(0,T;L^2(\Omega))}\leq K,
\end{aligned}
\end{align}
with an analogous inequality for $b_\tau$.
Similarly, we get the estimate
\begin{align}\label{apriori4}
\|\nabla (r_\tau b_\tau)(1-\rho_\tau)&+r_\tau b_\tau\nabla \rho_\tau\|_{L^2(0,T;L^2(\Omega))}\leq K.
\end{align}
For applying Aubin's lemma, we need one more property involving the time derivatives of $r_\tau$ and $b_\tau$.
\begin{lemma} \label{lemma5}
The discrete time derivatives of $r_\tau$ and $b_\tau$ are uniformly bounded, i.e.
\begin{align}\label{equ14}
\frac{1}{\tau}\|r_\tau-\sigma_\tau r_\tau\|_{L^2(0,T;H^1(\Omega)')}+\frac{1}{\tau}\|b_\tau-\sigma_\tau b_\tau\|_{L^2(0,T;H^1(\Omega)')}&\leq K. 
\end{align}
\end{lemma}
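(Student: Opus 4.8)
The plan is to bound the discrete time derivatives by duality, using the weak formulation \eqref{time_discrete_weak_3} to express the action of $\tfrac1\tau(r_\tau-\sigma_\tau r_\tau)$ on a test function as a sum of flux, drift and regularization terms, each of which is then controlled by the a priori bounds of Lemma \ref{lemma4} and \eqref{apriori3}--\eqref{apriori4}. Since $H^1(\Omega)$ is a separable Hilbert space, $L^2(0,T;H^1(\Omega))'=L^2(0,T;H^1(\Omega)')$, so it suffices to establish, for every $\Phi_1\in L^2(0,T;H^1(\Omega))$,
\[
\frac{1}{\tau}\left|\int_0^T\!\!\int_\Omega (r_\tau-\sigma_\tau r_\tau)\,\Phi_1\,dx\,dy\,dt\right|\le K\,\|\Phi_1\|_{L^2(0,T;H^1(\Omega))},
\]
and an analogous bound for $b_\tau$; taking the supremum over $\|\Phi_1\|_{L^2(0,T;H^1)}\le 1$ then yields \eqref{equ14}.

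First I would set $\Phi_2=0$ in \eqref{time_discrete_weak_3}, which identifies $\tfrac1\tau\int_0^T\!\int_\Omega(r_\tau-\sigma_\tau r_\tau)\Phi_1$ with minus the sum of the $r$-component flux terms, the drift term $\int_0^T\!\int_\Omega(1-\rho_\tau)r_\tau\nabla V_r\cdot\nabla\Phi_1$, and the regularization term $\tau R((\Phi_1,0),(u_\tau,v_\tau))$. Each flux term is estimated by Cauchy--Schwarz: the $x$- and $y$-diffusion contributions by the flux bound \eqref{apriori3}, the cross-diffusion contribution by \eqref{apriori4}, all paired against $\|\nabla\Phi_1\|_{L^2(0,T;L^2)}\le\|\Phi_1\|_{L^2(0,T;H^1)}$. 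The drift term is controlled using the $L^\infty$-bounds $0\le r_\tau,\rho_\tau\le 1$ together with $|\nabla V_r|=1$, again tested against $\|\nabla\Phi_1\|_{L^2(0,T;L^2)}$. All of these are manifestly uniform in $\tau$.

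The one contribution that requires care is the regularization term, since $u_\tau$ is only controlled through the weighted estimate $\sqrt{\tau}\,\|u_\tau\|_{L^2(0,T;H^1)}\le K$ from \eqref{apriori2}. Writing $\tau=\sqrt{\tau}\cdot\sqrt{\tau}$ and using $R((\Phi_1,0),(u_\tau,v_\tau))=\int_0^T\!\int_\Omega(\Phi_1 u_\tau+\nabla\Phi_1\cdot\nabla u_\tau)$, I would estimate
\[
\tau\left|\int_0^T R\!\left(\begin{pmatrix}\Phi_1\\0\end{pmatrix},\begin{pmatrix}u_\tau\\v_\tau\end{pmatrix}\right)dt\right|\le\sqrt{\tau}\,\|\Phi_1\|_{L^2(0,T;H^1)}\left(\sqrt{\tau}\,\|u_\tau\|_{L^2(0,T;H^1)}\right)\le\sqrt{T}\,K\,\|\Phi_1\|_{L^2(0,T;H^1)},
\]
which is bounded uniformly in $\tau$ (and in fact tends to $0$ as $\tau\to0$). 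Collecting all contributions gives the desired estimate with a constant $K$ independent of $\tau$, and the bound for $b_\tau$ follows verbatim upon taking $\Phi_1=0$ instead. The only genuine (and mild) obstacle is precisely this bookkeeping of uniformity in $\tau$: one must verify that the $\tau$-prefactor of the regularization term is absorbed against the only $\sqrt{\tau}$-controlled quantity, while all remaining terms inherit $\tau$-independent bounds directly from Lemma \ref{lemma4} and the $L^\infty$-bounds.
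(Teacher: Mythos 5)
Your proposal is correct and takes essentially the same route as the paper: test the weak formulation \eqref{time_discrete_weak_3} with $(\Phi_1,0)$ (resp.\ $(0,\Phi_2)$), bound the diffusion and cross-diffusion fluxes via \eqref{apriori3}--\eqref{apriori4}, the drift via the $L^\infty$-bounds, and absorb the regularization term using $\sqrt{\tau}\,\|u_\tau\|_{L^2(0,T;H^1(\Omega))}\leq K$ from \eqref{apriori2}, exactly as in the paper's proof of Lemma \ref{lemma5}. The only cosmetic difference is that you pair the drift term in $L^2$--$L^2$ while the paper uses an $L^\infty$--$L^1$ pairing; both give the same uniform bound.
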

\begin{proof}
Let $\Phi \in L^2(0,T;H^1(\Omega))$. Using the estimates in \eqref{apriori2}, \eqref{apriori3} and \eqref{apriori4} yields
\begin{align*}
&\frac{1}{\tau}\int_0^T \langle r_\tau-\sigma_\tau r_\tau,\Phi\rangle\,dt\\
=\,&-\frac{h}{2}\int_0^T\int_{\Omega}(\pa_x r_\tau(1-\rho_\tau)+r_\tau \pa_x \rho_\tau)\pa_x\Phi\,dx\,dy\,dt\\
&-h\int_0^T \int_{\Omega}(\gamma_0\pa_y r_\tau(1-\rho_\tau)+r_\tau\pa_y \rho_\tau)\pa_y \Phi\,dx\,dy\,dt\\
&-h\int_0^T \int_{\Omega}(\gamma\pa_y (r_\tau b_\tau)(1-\rho_\tau)+r_\tau b_\tau\pa_y \rho_\tau)\pa_y \Phi\,dx\,dy\,dt\\
&-\int_0^T\int_{\Omega}
(1-\rho_\tau)r_\tau\nabla V_r\cdot
\nabla \Phi\,dx\,dy\,dt\\
&-\tau \int_0^T\int_{\Omega} u_\tau\Phi+ \nabla u_\tau\cdot\nabla \Phi\,dx\,dy\,dt\\
\leq \,& \frac{h}{2}\|\pa_x r_\tau(1-\rho_\tau)+r_\tau \pa_x \rho_\tau\|_{L^2(0,T;L^2(\Omega))}\|\pa_x\Phi\|_{L^2(0,T;L^2(\Omega))}\\
&+h\|\gamma_0\pa_y r_\tau(1-\rho_\tau)+r_\tau\pa_y \rho_\tau\|_{L^2(0,T;L^2(\Omega))}\|\pa_y\Phi\|_{L^2(0,T;L^2(\Omega))}\\
&+h\|\gamma\pa_y (r_\tau b_\tau)(1-\rho_\tau)+r_\tau b_\tau\pa_y \rho_\tau\|_{L^2(0,T;L^2(\Omega))}\|\pa_y\Phi\|_{L^2(0,T;L^2(\Omega))}\\
&+\|(1-\rho_\tau)r_\tau\nabla V_r\|_{L^{\infty}(0,T;L^{\infty}(\Omega))}\|\nabla \Phi\|_{L^1(0,T;L^1(\Omega))}\\
&+\tau \|u_\tau\|_{L^2(0,T;H^1(\Omega))}\|\Phi\|_{L^2(0,T;H^1(\Omega))}\\
\leq & \, K\|\Phi\|_{L^2(0,T;H^1(\Omega))}.
\end{align*}
A similar estimate can be deduced for $b$ which concludes the proof.\qquad
\end{proof}
\smallskip
From Lemma \ref{lemma4} and Lemma \ref{lemma5} we know that $\rho_\tau\in L^2(0,T;H^1(\Omega))$ and $\frac{1}{\tau}(\rho_\tau-\sigma_\tau  \rho_\tau)\in L^2(0,T;H^1(\Omega)')$ respectively. This enables us to use Aubin's lemma (cf. \cite{dreher2012compact}, Theorem 1) to conclude the existence of a subsequence, also denoted by $\rho_\tau$, such that, as $\tau\to 0$:
\begin{equation*}
\rho_\tau \to \rho \quad \text{ strongly in } L^2(0,T;L^2(\Omega)).
\end{equation*}
This implies
\begin{align}
1-\rho_\tau \to 1- \rho \quad \text{ strongly in } L^2(0,T;L^2(\Omega)),\label{equ5}\\
\sqrt{1-\rho_\tau} \to \sqrt{1- \rho} \quad \text{ strongly in } L^4(0,T;L^4(\Omega)).\nonumber
\end{align}
Due to the continuous embedding of $L^4(0,T;L^4(\Omega))$ in $L^2(0,T;L^2(\Omega))$, it also holds that
\begin{equation}\label{equ7}
\sqrt{1-\rho_\tau} \to \sqrt{1- \rho} \quad \text{ strongly in } L^2(0,T;L^2(\Omega)).
\end{equation}
To pass to the limit $\tau\to 0$ in \eqref{time_discrete_weak_3}, we need to identify the weak $L^2$- limiting functions of the following terms:\\

\begin{compactenum}[(i)]
\item$\nabla r_\tau(1-\rho_\tau)+r_\tau \nabla \rho_\tau,\quad\nabla b_\tau (1-\rho_\tau)+ b_\tau\nabla \rho_\tau$\label{1}\\
\item$\pa_y (r_\tau b_\tau)(1-\rho_\tau)+r_\tau b_\tau\pa_y \rho_\tau$\label{2}\\ 
\item$(1-\rho_\tau)r_\tau\nabla V_r,\quad (1-\rho_\tau)b_\tau\nabla V_b$\label{3}\\
\item$\tau u_\tau,\,\tau v_\tau,\,\tau \nabla u_\tau,\,\tau \nabla v_\tau$\label{4}\\
\end{compactenum}

The terms in \eqref{3} converge weakly in $L^2(0,T;L^2(\Omega))$ as $1-\rho_\tau$ converges strongly in $L^2(0,T;L^2(\Omega))$ by \eqref{equ5} and because of the $L^{\infty}$ bounds for $b_\tau$ and $r_\tau$, up to a subsequence,
\begin{equation}\label{equ8}
r_\tau\rightharpoonup r, \quad b_\tau\rightharpoonup b \quad \text{ weakly}^*\text{ in } L^{\infty}(0,T;L^{\infty}(\Omega)).
\end{equation} 
Because of \eqref{apriori2}, we get that
\begin{equation*}
\tau u_\tau, \tau v_\tau \to 0 \quad \text{ strongly in } L^2(0,T;H^1(\Omega)),
\end{equation*}
which identifies the limit in \eqref{4}.\\
The weak convergence of the terms in \eqref{1} and \eqref{2} can be shown with the help of a generalized Aubin-Lions Lemma (see Lemma 7 in \cite{zamponi2015analysis}). It states that if \eqref{equ14}, \eqref{equ7}, \eqref{equ8} and 
\begin{equation}\label{equ15}
\|\sqrt{1-\rho_\tau}\,g\|_{L^2(0,T;H^1(\Omega))}\leq K\quad\text{ for } g\in \{1,r_\tau,b_\tau\}
\end{equation} 
hold, we have strong convergence up to a subsequence for all $f=f(r_\tau,b_\tau)\in C^0$ of
\begin{equation}\label{equ9}
\sqrt{1-\rho_\tau}f(r_\tau,b_\tau) \to \sqrt{1-\rho}f(r,b) \quad \text{ strongly in } L^2(0,T;L^2(\Omega)), 
\end{equation}
as $\tau \to 0$. Note that \eqref{equ15} can be deduced from the previous a priori estimates.
Writing \eqref{1} as
\begin{align*}
\nabla r_\tau(1-\rho_\tau)+r_\tau \nabla \rho_\tau&=\sqrt{1-\rho_\tau}\nabla (\sqrt{1-\rho_\tau} r_\tau)-r_\tau\sqrt{1-\rho_\tau}\nabla \sqrt{1-\rho_\tau}-r_\tau \nabla(1- \rho_\tau)\\
&=\sqrt{1-\rho_\tau}\nabla (\sqrt{1-\rho_\tau} r_\tau)-r_\tau\sqrt{1-\rho_\tau}\nabla \sqrt{1-\rho_\tau}\\
&\quad -2r_\tau\sqrt{1-\rho_\tau} \nabla\sqrt{1- \rho_\tau}\\
&=\sqrt{1-\rho_\tau}\nabla (\sqrt{1-\rho_\tau} r_\tau)-3r_\tau\sqrt{1-\rho_\tau}\nabla \sqrt{1-\rho_\tau}
\end{align*}
and applying \eqref{equ9} with $f(r_\tau,b_\tau)=r_\tau$, we get that 
\[\sqrt{1-\rho_\tau}\,r_\tau\to \sqrt{1-\rho}\,r \quad \text{ strongly in }L^2(0,T,L^2(\Omega)).\]
Moreover, the $L^{\infty}$ bounds together with \eqref{apriori2} give us $L^2$ bounds for $\nabla (\sqrt{1-\rho_\tau} r_\tau)=\nabla\sqrt{1-\rho_\tau}r_\tau+2\sqrt{r_\tau}\sqrt{1-\rho_\tau}\nabla\sqrt{r_\tau}$ and $\nabla \sqrt{1-\rho_\tau}$.\\
Together, we have
\begin{equation*}
\nabla r_\tau(1-\rho_\tau) +r_\tau \nabla \rho_\tau\rightharpoonup \nabla r(1-\rho)+r\nabla \rho \quad  \text{ weakly in } L^2(0,T;L^2(\Omega)).
\end{equation*}
Similarly, \eqref{2} can be written as
\begin{align*}
\pa_y (r_\tau b_\tau)(1-\rho_\tau)+r_\tau b_\tau \pa_y \rho_\tau&=\sqrt{1-\rho_\tau}\pa_y (\sqrt{1-\rho_\tau} r_\tau b_\tau)-3r_\tau b_\tau\sqrt{1-\rho_\tau}\pa_y \sqrt{1-\rho_\tau}.
\end{align*}
Applying \eqref{equ9} with $f(r_\tau,b_\tau)=r_\tau b_\tau$ and using analogous arguments as in (\ref{1}), we get that
\begin{equation}\label{equ11}
\pa_y (r_\tau b_\tau) (1-\rho_\tau) +r_\tau b_\tau \pa_y \rho_\tau\rightharpoonup \pa_y (rb)(1-\rho)+rb\pa_y \rho \quad \text{ weakly in } L^2(0,T;L^2(\Omega)).
\end{equation}

From Lemma \ref{lemma5} we derive that
\[\frac{1}{\tau}(r_\tau-\sigma_\tau r_\tau)\rightharpoonup \pa_t r,\quad\frac{1}{\tau}(b_\tau-\sigma_\tau b_\tau)\rightharpoonup \pa_t b \quad \quad \text{ weakly in } L^2(0,T;H^1(\Omega)').\]
This, together with the convergences in \eqref{equ8}-\eqref{equ11} allows to finally pass to the limit $\tau\to 0$ in \eqref{time_discrete_weak_3}, which gives the weak formulation \eqref{theorem1_1}.\\
The only thing which remains to verify is the entropy inequality \eqref{theorem1_2}. Since $E$ is convex and continuous, it is weakly lower semi-continuous. Because of the weak convergence of $(r_\tau(t),b_\tau(t))$,
\[\int_\Omega E(r(t),b(t))\,dx\,dy\leq \liminf_{\tau\to 0}\int_\Omega E(r_\tau (t),b_\tau (t))\,dx\,dy \quad \text{ for a.e. } t>0.\]
We cannot expect the identification of the limit of $\sqrt{1-\rho_\tau}\nabla \sqrt{r_\tau}$, but employing \eqref{equ9} with $f(r,b)=\sqrt{r}$, we get 
\[\sqrt{1-\rho_\tau} \sqrt{r_\tau} \to \sqrt{1-\rho}\sqrt{r} \quad \text{ strongly in }L^2(0,T;L^2(\Omega))\]
with analogous convergence results for $r$ being replaced by $b$. 
Because of the $L^{\infty}$-bounds and the bounds in \eqref{entropy_ineq_3}, we obtain $\nabla(\sqrt{1-\rho_\tau}\sqrt{r_\tau})\in L^2(0,T;L^2(\Omega))$, which implies 
\begin{align}\label{equ12}
\begin{aligned}
\sqrt{1-\rho_\tau} \sqrt{r_\tau} &\rightharpoonup \sqrt{1-\rho}\sqrt{r} \quad \text{ weakly in }L^2(0,T;H^1(\Omega)),\\
\sqrt{1-\rho_\tau} \sqrt{b_\tau} &\rightharpoonup \sqrt{1-\rho}\sqrt{b} \quad \text{ weakly in }L^2(0,T;H^1(\Omega)).
\end{aligned}
\end{align}
The $L^{\infty}$-bounds, \eqref{equ12} and the fact that
\[\nabla \sqrt{1-\rho_\tau}\rightharpoonup \nabla \sqrt{1-\rho} \quad \text{ weakly in } L^2(0,T;L^2(\Omega)),\]
imply 
\begin{align*}
(1-\rho_\tau)\nabla \sqrt{r_\tau}&=\sqrt{1-\rho_\tau}\nabla (\sqrt{1-\rho_\tau} \sqrt{r_\tau})-\sqrt{1-\rho_\tau}\sqrt{r_\tau}\nabla \sqrt{1-\rho_\tau} \\
(1-\rho_\tau)\nabla \sqrt{b_\tau}&=\sqrt{1-\rho_\tau}\nabla (\sqrt{1-\rho_\tau} \sqrt{b_\tau})-\sqrt{1-\rho_\tau}\sqrt{b_\tau}\nabla \sqrt{1-\rho_\tau} 
\end{align*}
converge weakly in $L^1$ to the corresponding limits. The $L^2$ bounds imply also weak convergence in $L^2$:
\begin{align*}
(1-\rho_\tau)\nabla \sqrt{r_\tau}&\rightharpoonup (1-\rho)\nabla \sqrt{r}\quad \text{ weakly in }L^2(0,T;L^2(\Omega)),\\
(1-\rho_\tau)\nabla \sqrt{b_\tau}&\rightharpoonup (1-\rho)\nabla \sqrt{b}\quad \text{ weakly in }L^2(0,T;L^2(\Omega)).
\end{align*}
As $1-\rho_\tau\geq (1-\rho_\tau)^2$, we can pass to the limit inferior $\tau\to 0$ in
\begin{align*}
\begin{aligned}
&\quad\int_{\Omega} E(r_k,b_k)\,dx\,dy+C_0\tau\sum_{j=1}^k\int_{\Omega} (1-\rho_\tau)^2|\nabla\sqrt{r_\tau}|^2\,dx\,dy\\
&+C_0\tau\sum_{j=1}^k\int_{\Omega}  (1-\rho_\tau)^2|\nabla\sqrt{b_\tau}|^2+|\nabla\sqrt{1-\rho_\tau}|^2+|\nabla \rho_\tau|^2\,dx\,dy\\
&+\tau^2\sum_{j=1}^k R\left(\begin{pmatrix}
u_j\\ v_j
\end{pmatrix},\begin{pmatrix}
u_j\\v_j
\end{pmatrix}\right) \leq\int_{\Omega} E(r_0,b_0)\,dx\,dy+T C,
\end{aligned}
\end{align*}
attaining the entropy inequality \eqref{theorem1_2}.\qquad \endproof
\subsection{Existence for the general model}
The previous analysis can easily be extended to the case $0<\alpha\leq\frac{1}{2}$ and $\gamma_1-\gamma_2=\mathcal{O}(h)$ in \eqref{system}, i.e. leading to the system \eqref{equ.system2}.
The particular choice of parameters allows to obtain the following result. 
\medskip
\begin{theorem}{(Global existence)}\label{theorem2}
Let $0<\alpha\leq\frac{1}{2}$, $\gamma_1-\gamma_2=\mathcal{O}(h)$, $T>0$ and $(r_0,b_0):\Omega \to \mathcal{M}$, where $\mathcal{M}$ is defined by \eqref{equ:set}, be a measurable function such that $E(r_0,b_0)\in L^1(\Omega)$. Then there exists a weak solution $(r,b):\Omega\times (0,T)\to \overline{\mathcal{M}}$ to system \eqref{equ.system2} with periodic boundary conditions in $x$-direction and no-flux boundary conditions in $y$-direction satisfying the same regularity results and entropy dissipation inequality as stated in Theorem \ref{theorem1}.
\end{theorem}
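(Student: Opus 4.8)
The plan is to follow the same scheme used for Theorem \ref{theorem1}, since the only differences between \eqref{system_rb} and \eqref{equ.system2} are the $\alpha$-dependent factors $(1+\alpha r)$, $(1+\alpha b)$ in the $x$-direction and the additional first-order $y$-drift $\pm h\,\pa_y((1-\rho)br)$ arising from $\gamma_1-\gamma_2=h$. I would keep the entropy functional \eqref{entropy}, the entropy variables $u,v$ and the entropy density $E$ exactly as before. The crucial observation is that $E$ and the convexity statement of Lemma \ref{lemma1} do not involve $\alpha$ or $\gamma_1-\gamma_2$ at all: they depend only on $r$, $b$, $\rho$ and the linear potentials $V_r$, $V_b$. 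Hence Lemma \ref{lemma1} applies verbatim and continues to furnish the $L^\infty$-bounds $(r,b)\in\overline{\mathcal M}$ that drive the whole argument.

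The heart of the extension is the analogue of Lemma \ref{lemma_entropy}. First I would recast \eqref{equ.system2} in the divergence/entropy-variable form $\pa_t(r,b)^{T}=\nabla\cdot(\tilde M\,\nabla(u,v)^{T}+\tilde H)$ with an $\alpha$-modified mobility matrix $\tilde M$. Expanding the $x$-flux shows that the diffusive part of the $r$-equation carries the coefficient $\tfrac h2(1-\rho)(1-2\alpha r)$, and likewise $\tfrac h2(1-\rho)(1-2\alpha b)$ for $b$. This is exactly where the hypothesis $0<\alpha\le\tfrac12$ enters: since $r,b<1$ we have $1-2\alpha r\ge0$ and $1-2\alpha b\ge0$, so the diagonal $x$-block stays non-negative; the $y$-block carries no $\alpha$-dependence and is identical to the one in \eqref{system_entropy}, so $\tilde M$ remains positive semi-definite by the same block-by-block argument used for $G$ in Subsection \ref{entropy_disspation}.

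Next I would reproduce the dissipation computation \eqref{equ_1}--\eqref{entropy_ineq2}, tracking the two genuinely new contributions. The $\alpha$-terms enter the non-quadratic $x$-part just as the $\tfrac{r}{2}\pa_x\rho$ term did, and since $1+\alpha r\in[1,\tfrac32]$ they are absorbed by Young's inequality into the $x$-dissipation plus a bounded remainder, exactly as in the passage following \eqref{equ_2}. The new $y$-drift, after testing with $(u,v)$ and integrating by parts, produces $-h\int_\Omega(1-\rho)br(\pa_y u-\pa_y v)\,dx\,dy$; I would split this with Young's inequality, absorbing the quadratic parts into the $\gamma_0$-dissipation $2\gamma_0(1-\rho)(r(\pa_y u)^2+b(\pa_y v)^2)$ and bounding the remainder by $\int_\Omega(1-\rho)rb^2\,dx\,dy\le|\Omega|$, using $0\le\rho\le1$ on the bounded domain. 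This yields the same reduced inequality \eqref{entropy_ineq2}, with possibly enlarged constants $C_0,C$ depending on $\alpha$ and $\gamma_0$.

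With the entropy estimate in hand the remaining steps transcribe without change. The implicit-Euler time discretization with the $H^1$-regularization, the Lax--Milgram solvability of the linearized problem, and the Schauder fixed-point argument all go through because the modified $\tilde G$, $\tilde H$ are still bounded and continuous in $(r,b)$ with $\tilde G$ positive semi-definite; the factors $1+\alpha r$ and $1-2\alpha r$ lie in $[\tfrac12,\tfrac32]$, so the a priori bounds of Lemma \ref{lemma4}, \eqref{apriori3}, \eqref{apriori4} and the time-derivative bound of Lemma \ref{lemma5} hold with the same proofs. The generalized Aubin--Lions lemma then identifies the weak limits of the nonlinear flux terms, the extra $(1-\rho)br$ drift being handled through \eqref{equ9} with $f(r,b)=rb$, and weak lower semicontinuity of $E$ gives the entropy inequality \eqref{theorem1_2}. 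I expect the main obstacle to be the bookkeeping in the entropy dissipation, namely verifying that the $\alpha$-modified diffusion matrix is genuinely positive semi-definite (the precise place where $\alpha\le\tfrac12$ is needed) and that the new $y$-drift can be absorbed without spoiling the coercivity of the dissipation, rather than any conceptually new difficulty.
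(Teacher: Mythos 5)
Your proposal follows essentially the same route as the paper, whose proof of Theorem \ref{theorem2} is precisely this sketch: reuse the entropy, the $L^\infty$-bounds from Lemma \ref{lemma1}, and the discretization--regularization--Schauder scheme unchanged, absorb the new $\alpha$-terms and the $\pm h\,\pa_y((1-\rho)br)$ drift into the dissipation via Young's inequality (the paper even asserts this works with the same constant $C_0$), and handle the extra nonlinear fluxes in the limit $\tau\to0$ by the generalized Aubin--Lions lemma, with your identification of $(1-\rho)(1-2\alpha r)$ as the $x$-diffusion coefficient correctly locating where $0<\alpha\leq\tfrac12$ enters. One cosmetic slip: $1-2\alpha r$ lies in $[0,1]$, not $[\tfrac12,\tfrac32]$ (it degenerates as $r\to1$ when $\alpha=\tfrac12$), but only nonnegativity and boundedness are actually used, so nothing breaks.
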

\smallskip
The parameter regime $\gamma_1-\gamma_2=\mathcal{O}(h)$  and $0<\alpha \leq\frac{1}{2}$ results in additional terms in the entropy dissipation which can be estimated using Young's inequality in a way that inequality \eqref{entropyinequality} holds for the same constant $C_0$. In the existence proof, the limit $\tau \to 0$ requires some additional compactness results for the new terms which we obtain (as in the proof of Theorem \ref{theorem1}) by a generalized version of the Aubin-Lions lemma (cf. Lemma 7 in \cite{zamponi2015analysis}).

\section{Numerical simulations} \label{numerical_simulations}
In this last section we illustrate the behavior of the model with numerical simulations in spatial dimension two. In particular we compare the solutions of the minimal  model \eqref{system_rb}, i.e. no cohesion and no preference for dodging to one side with those of the system \eqref{equ.system2}.
All simulations have been carried out using the COMSOL Multiphysics Package with quadratic finite elements. We consider the domain $\Omega=[0,1]\times[0,0.1]$ representing a corridor, where we use a mesh consisting of 608 triangular elements and a BDF method with maximum time step $0.1$ to solve the corresponding system.  
We start with small perturbations of trivial stationary states to study if the system returns to this trivial solutions or results in a more complex one.
\subsection{Example I: Equilibration}
We consider system \eqref{system} with no adhesion and no preference to step to the right or left, i.e the minimal model \eqref{system_rb}.
We choose the parameters $\gamma_0=0.1$, $\gamma=0.2$ and $h=0.3$ and the initial values 
\begin{align}\label{initialvalues}
\begin{aligned}
r_0(x,y)&=c_r+0.02\sin(\pi x)\cos\left(\frac{\pi y}{0.1}\right),\\
b_0(x,y)&=c_b-0.02\sin(\pi x)\cos\left(\frac{\pi y}{0.1}\right),
\end{aligned}
\end{align}
with $c_r=c_b=0.4$. Figure \ref{f:ex1} illustrates the initial value $r_0$ and the solution $r_T$ to system \eqref{system_rb} at time $T=5$, where it can be seen that in this setting the solution returns back to the equilibrium state quickly. 

\begin{figure}[h!]
\begin{center}
\subfigure[$r_0$]{\includegraphics[height=50mm, width=60mm]{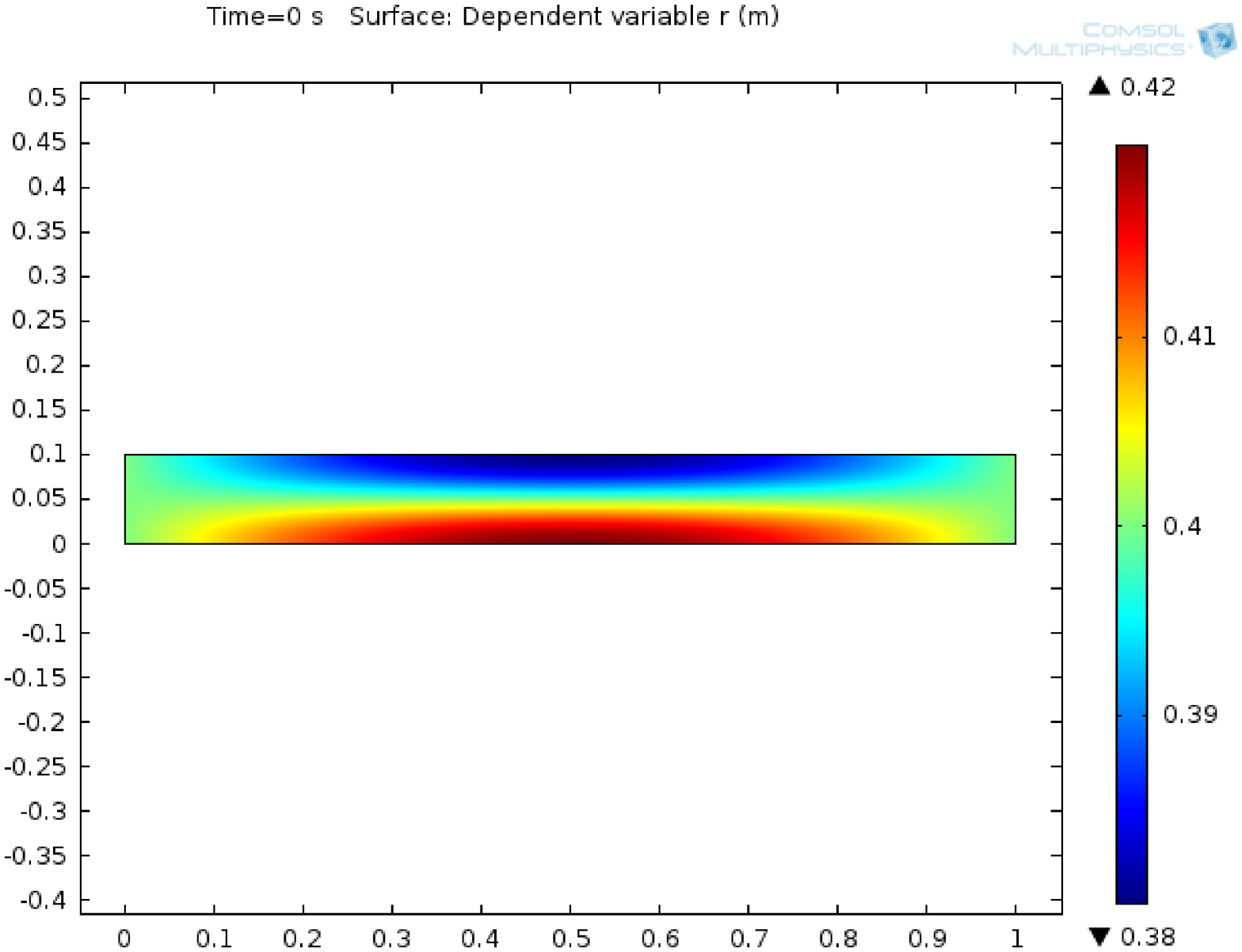}}
\subfigure[$r_T$ for $T=5$]{\includegraphics[height=50mm, width=60mm]{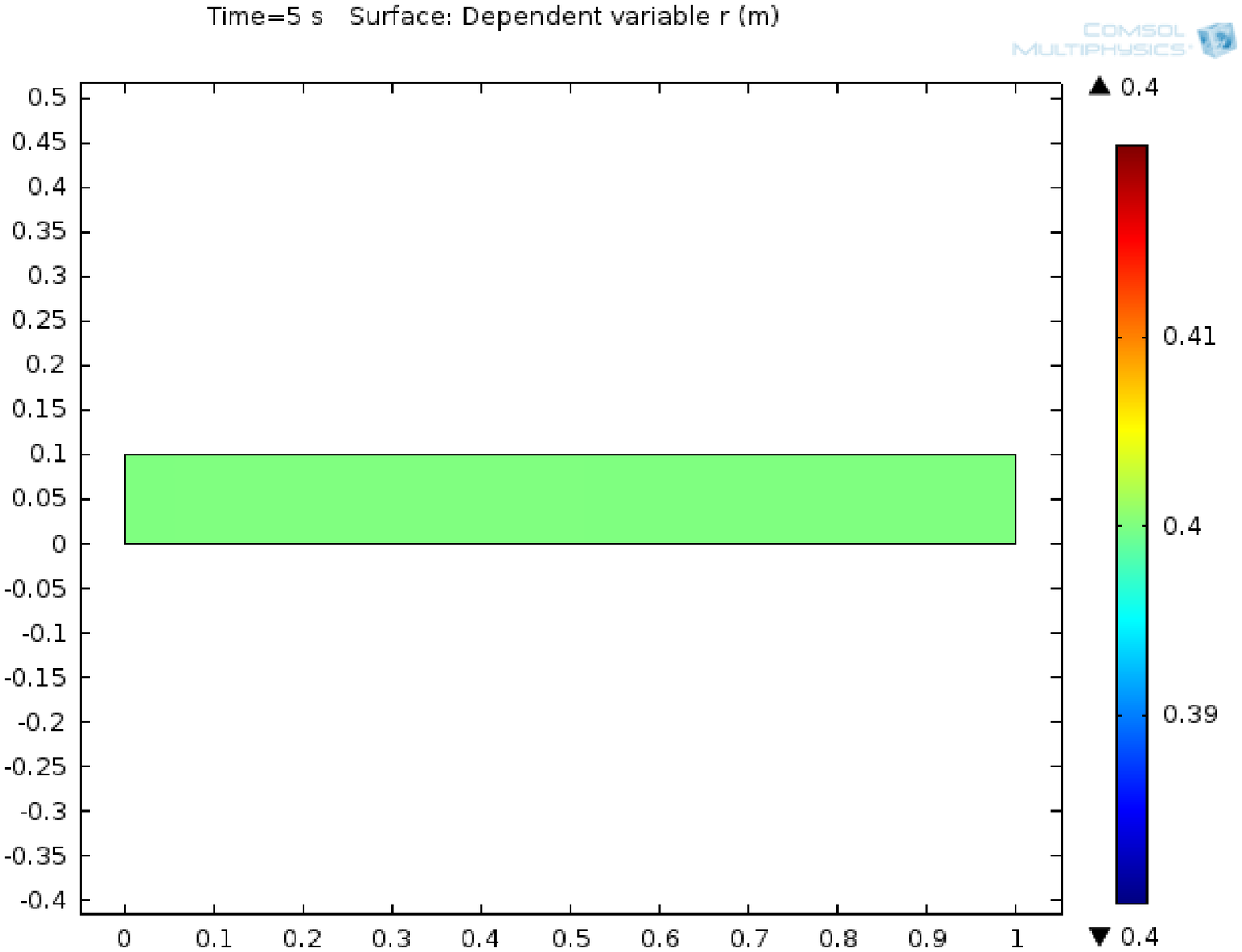}}
\caption{Example I: Red particle density returning to the constant stationary state after initial perturbation.}\label{f:ex1}
\end{center}
\end{figure}

\subsection{Example II: Lane-formation}\label{ex2}
The behavior of solutions to system \eqref{equ.system2} corresponding to the scaling $\gamma_1-\gamma_2=\mathcal{O}(h)$ is different. 
Setting $\gamma_0=0.001$, $\gamma_1=0.5$, $\gamma_2=0.4$, $\alpha=0.2$ and $h=0.1$, such that $\gamma_1-\gamma_2=\mathcal{O}(h)$, and choosing the same initial values \eqref{initialvalues} as above, we obtain weak lane formation illustrated in Figure \ref{f:ex2}. As $\gamma_1>\gamma_2$, the individuals have a tendency to step to the right. Therefore red individuals are highly concentrated on the bottom of the domain, whereas the blue individuals move to the top.

\begin{figure}[h!]
\begin{center}
\subfigure[$r_T$ for $T=5$]{\includegraphics[height=50mm, width=60mm]{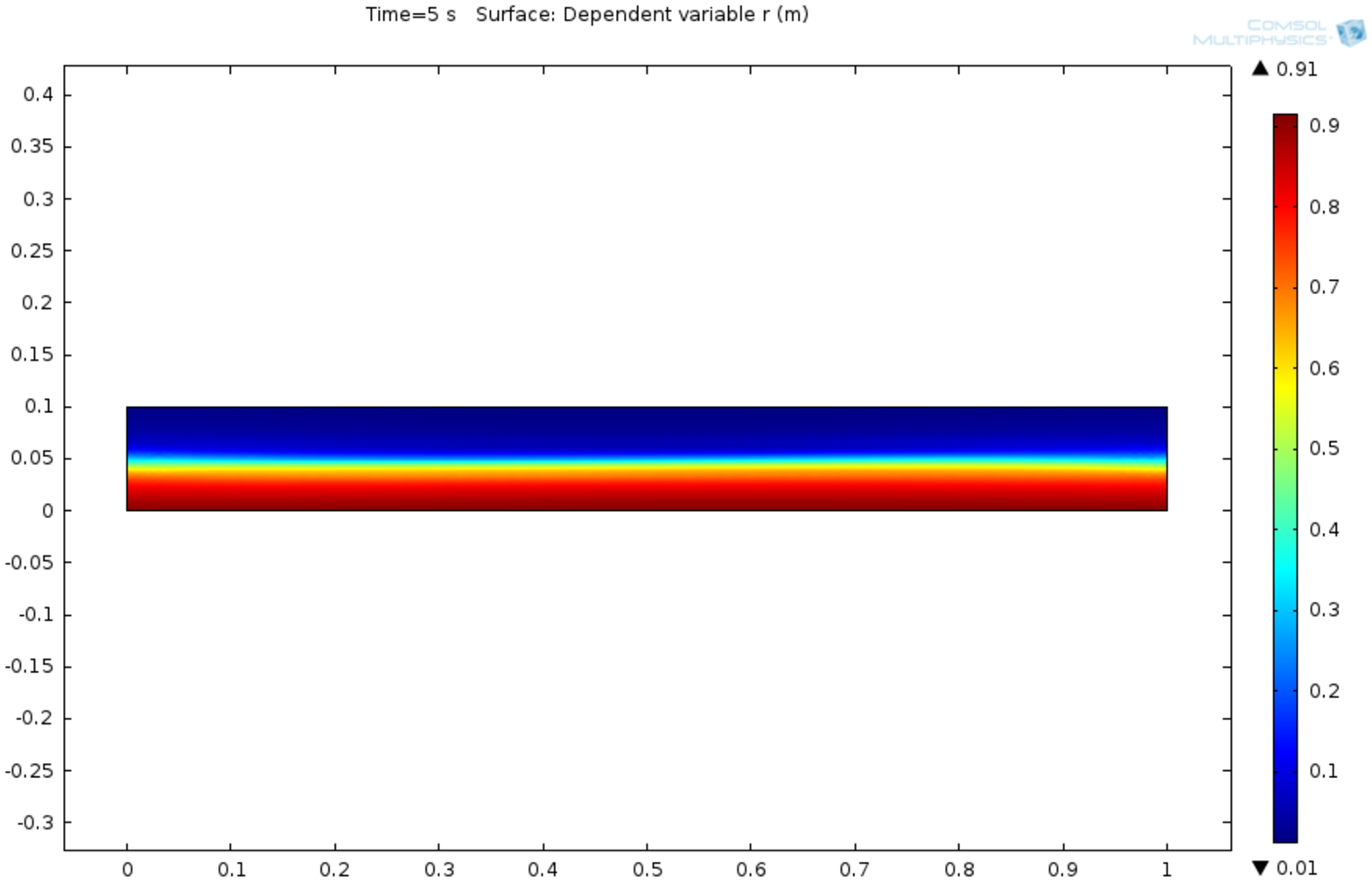}}
\subfigure[$b_T$ for $T=5$]{\includegraphics[height=50mm, width=60mm]{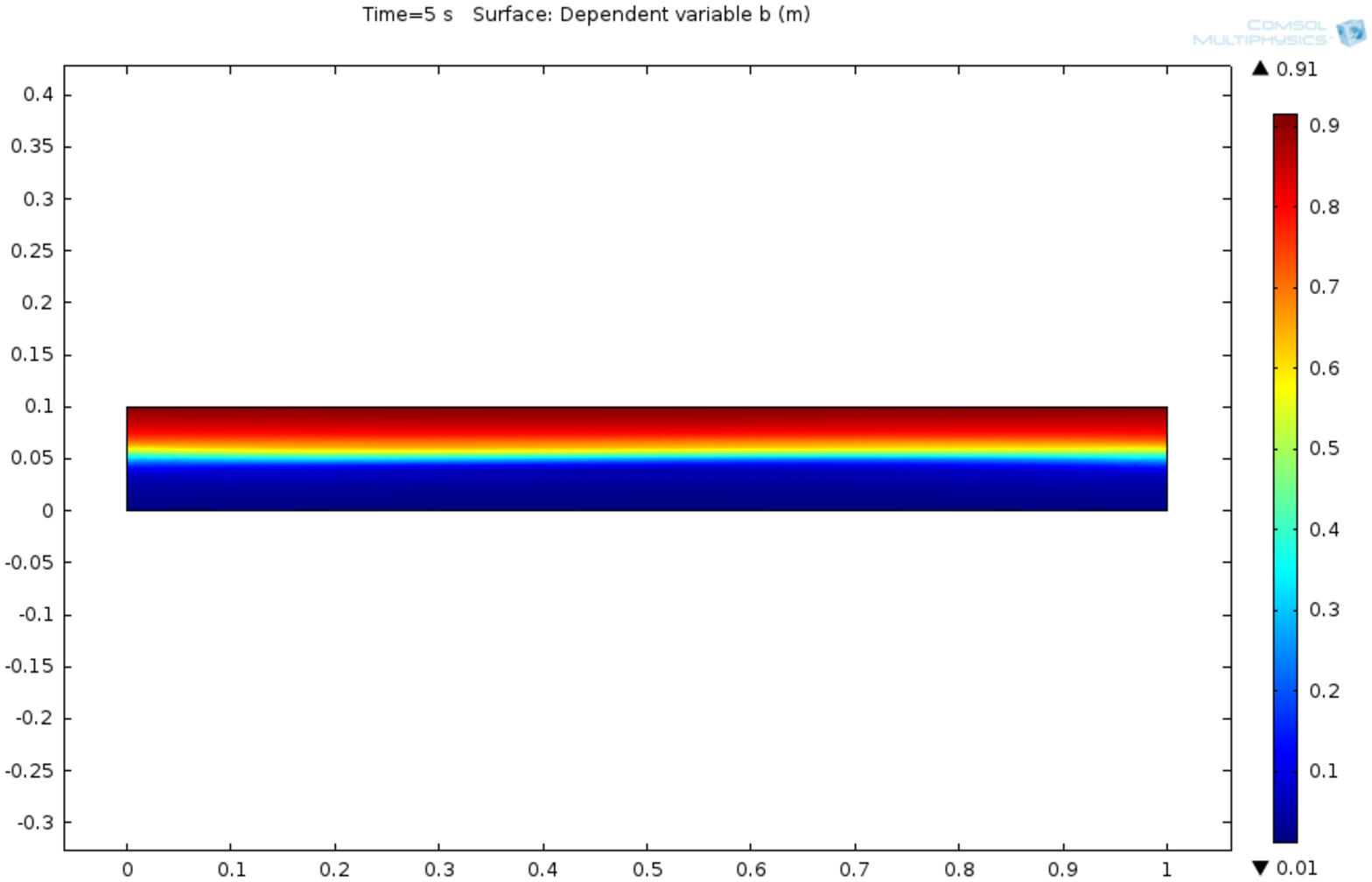}}
\caption{Example II: Red and blue particle distribution at time $T=5$ forming weak lanes.}\label{f:ex2}
\end{center}
\end{figure}
Figure \ref{f:plot1} shows the cross section of the two-dimensional solution $r_T$ at time $T=100$ for different initial masses. The parameters are the same as above while the constants $c_r=c_b=c$ in the initial values \eqref{initialvalues} are being varied. We observe that weak lane formation is more pronounced for smaller values of $\gamma_0$ as well as higher densities. The transition region around
$y = 0.05$ decreases for smaller diffusitivity and greater mass, while the behavior of $r$ is the same in the low density region $(0.05, 0.1)$ for all parameter sets.\\
\begin{figure}[h]
\begin{center}
\subfigure[$r$ at time $T=100$]{\includegraphics[scale=0.14]{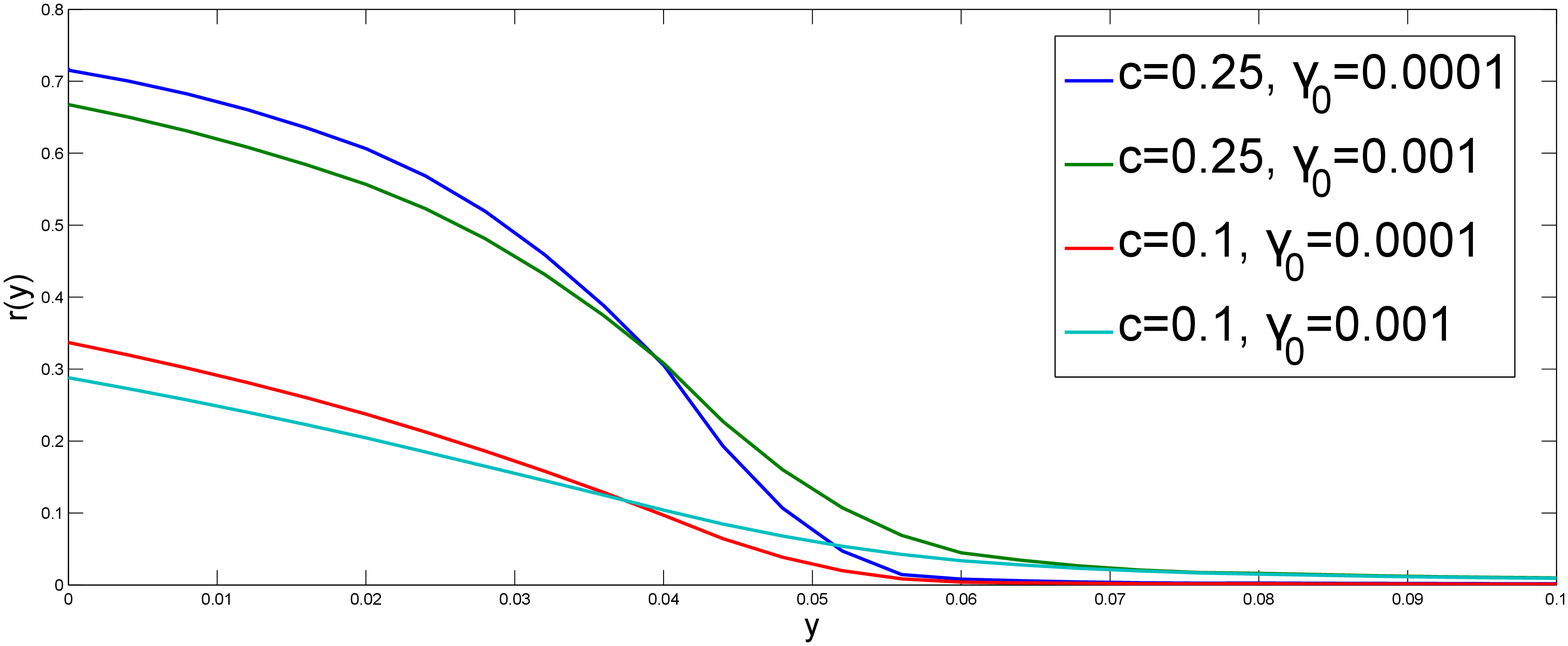} }
\subfigure[$b$ at time $T=100$]{\includegraphics[scale=0.14]{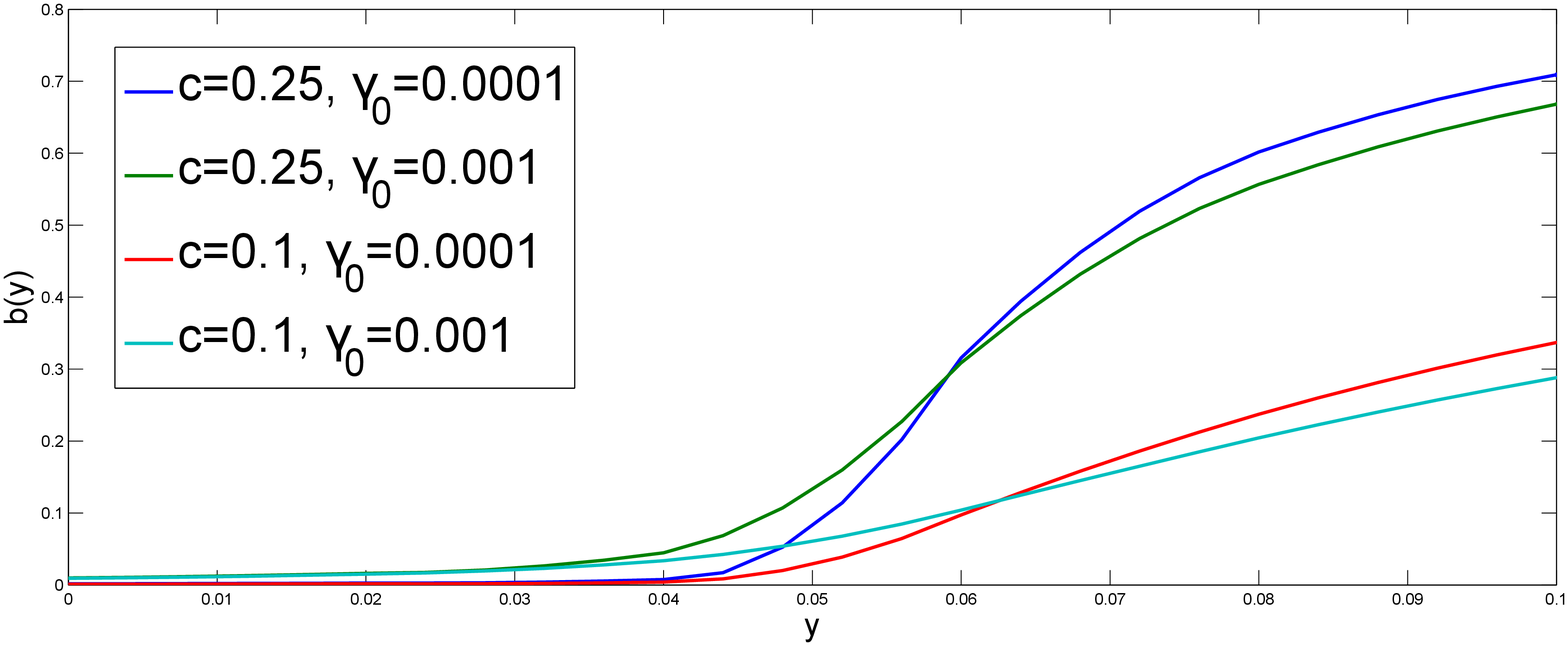}}
\caption{Example II: Red and blue particle density in $y$-direction at time  $T=100$.}\label{f:plot1}
\end{center}
\end{figure}
In the case of different masses $M_r$ and $M_b$ we observe asymmetric weak lane formation.
We choose initial values of the form \eqref{initialvalues}, i.e. $c_r=0.4$ and $c_b=0.1$. Figure \ref{f:plot2} shows the formation of such weak lanes due to the side-stepping mechanism even though the mass $M_b$ is smaller than $M_r$.
\begin{figure}[h]
\begin{center}
\includegraphics[scale=0.14]{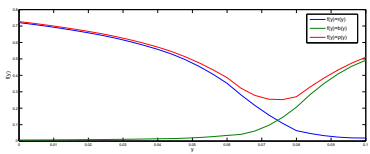}
\caption{Example II: Red and blue particle density as well as their sum $\rho$ at time $T=100$ in case of non-equal initial mass.}\label{f:plot2}
\end{center}
\end{figure}

\subsection{Example III: Jam} 
We conclude with a numerical simulation showing another well known phenomena in crowd dynamics, namely traffic jams or so-called 'freezing'. 
If the diffusion coefficients are small, i.e. $h=0.1$ and $\gamma_0=0.0001$, it may happen that the individuals cannot move in their walking direction 
any more as the initial masses are high compared to the diffusion coefficients. This ends in a jam or 'frozen' configuration as Figure \ref{f:ex1_1} illustrates.
\begin{figure}[h!]
\begin{center}
\subfigure[$r_T$ for $T=100$]{\includegraphics[height=50mm, width=60mm]{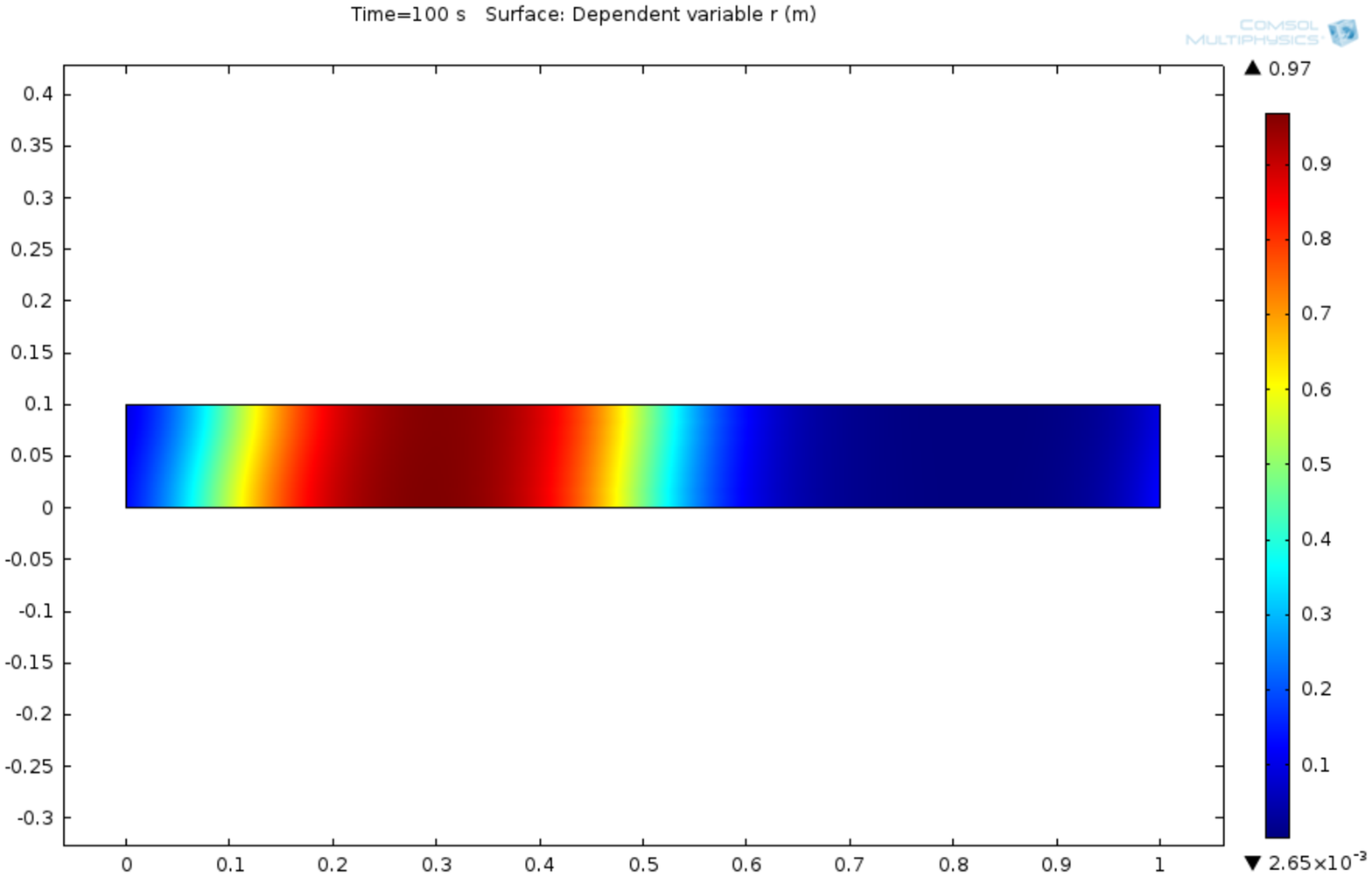}}
\subfigure[$b_T$ for $T=100$]{\includegraphics[height=50mm, width=60mm]{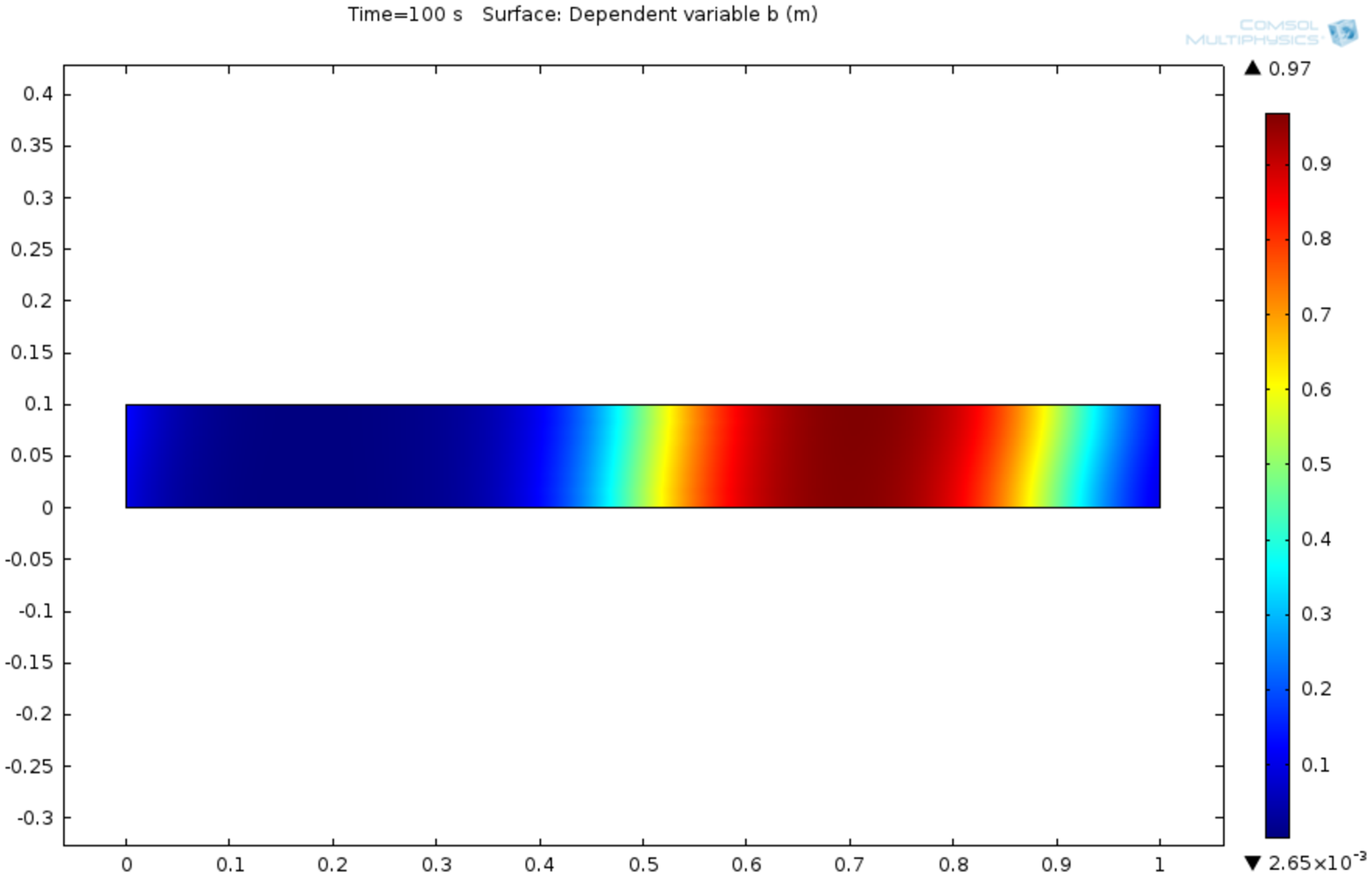}}
\caption{Example III: Congestion in the red and blue particle density resulting in a deadlock.}\label{f:ex1_1}
\end{center}
\end{figure}

\newpage

\section*{Acknowledgements}
The authors would like to thank Christian Schmeiser for his helpful suggestions and the discussions.


\bibliography{laneformation}
\bibliographystyle{plain}

\end{document}